\documentclass[a4paper,10pt]{amsart}
\usepackage[utf8]{inputenc}

\usepackage{amssymb}
\usepackage{amsthm}
\usepackage{amsmath,amscd}
\usepackage[mathscr]{euscript}
\usepackage[all]{xy}
\usepackage{lmodern}
\usepackage[T1]{fontenc}
\usepackage[textwidth=14cm,hcentering]{geometry}
\usepackage[colorlinks=true,linkcolor=red,citecolor=blue]{hyperref}
\usepackage{cleveref}
\usepackage{mathtools}
\usepackage{xspace}
\usepackage{tikz}
\usetikzlibrary{cd}
\usetikzlibrary{arrows,positioning}

\newtheorem{thm}{Theorem}[section]
\newtheorem{lm}[thm]{Lemma}
\newtheorem{prop}[thm]{Proposition}
\newtheorem{cor}[thm]{Corollary}

\newtheorem*{thm*}{Theorem}

\theoremstyle{definition}
\newtheorem{defn}[thm]{Definition}

\newtheorem{rmk}[thm]{Remark}

\newtheorem*{rmk*}{Remark}

\newtheorem{thmx}{Theorem}

\newtheorem{corx}[thmx]{Corollary}

\newcommand{\op}{^{\mathrm{op}}}

\newcommand{\Pic}{\mathrm{Pic}}
\newcommand{\cat}{\mathbf}

\newcommand{\N}{\mathbb N}
\newcommand{\Z}{\mathbb Z}

\newcommand{\Sch}{\cat{Sch}}

\newcommand{\K}{\mathrm{K}}

\newcommand{\Vect}{\mathrm{Vect}}
\newcommand{\D}{\mathrm{D}}

\newcommand{\Spec}{\mathrm{Spec\,}}
\newcommand{\gr}{\mathrm{gr}}
\renewcommand{\H}{\mathrm{H}}

\newcommand{\mot}{\mathrm{mot}}

\DeclareFontFamily{U}{min}{}
\DeclareFontShape{U}{min}{m}{n}{<-> udmj30}{}

\makeatletter
\let\c@equation\c@thm

\makeatother

\title{Vector bundles on singular surfaces and threefolds}

\author{Jean Fasel}
\address{Institut Fourier \\
UMR 5582 \\
France} 
\email{\href{mailto:Jean.Fasel@univ-grenoble-alpes.fr}{Jean.Fasel@univ-grenoble-alpes.fr}}
\urladdr{\url{https://www-fourier.ujf-grenoble.fr/~faselj/index.html}}

\author{Maria Yakerson}
\address{CNRS \& IMJ-PRG\\
Paris\\
France}
\email{\href{mailto:yakerson@imj-prg.fr}{yakerson@imj-prg.fr}}
\urladdr{\url{https://www.muramatik.com}}

\date{\today}

\begin{document}

\begin{abstract}
We fully classify vector bundles of critical rank on normal affine surfaces and
threefolds over algebraically closed fields via their Chern classes. We also
prove existence of rank $2$ bundles on threefolds with prescribed first and
second motivic Chern classes. The main input is the motivic filtration on
algebraic $K$-theory: its third piece is generated by smooth closed points and
is identified with motivic cohomology by the third Chern class.
\end{abstract}

\maketitle

\parskip 0.2cm

\parskip 0pt
\tableofcontents

\parskip 0.2cm
\vspace{-2em}

\section{Introduction}

The classification of algebraic vector bundles by characteristic classes is a
classical problem in algebraic geometry. Over an algebraically closed field,
smooth affine surfaces and threefolds provide particularly striking examples.
Murthy and Swan proved that rank $2$ bundles on a smooth affine surface are
classified by their first two Chern classes \cite{MurthySwan76}. On a smooth
affine threefold, Mohan Kumar and Murthy obtained the corresponding
classification in rank $3$ \cite{Mohan82}. Using methods from motivic homotopy
theory, Asok and Fasel established the rank $2$ classification in
characteristic different from $2$ \cite{asok-fasel2}.

We note that the above cases are the only interesting ones for the classification problem. Indeed, line bundles are classified by their first Chern class,
while Serre's splitting theorem \cite{Serre58} says that a vector bundle of
rank greater than the dimension of an affine scheme splits off a trivial line
bundle. Moreover, for vector bundles of rank equal to the dimension of the (affine) variety, the top Chern class is the obstruction
to such a splitting: over a smooth affine variety over an algebraically closed
field, a theorem of Murthy says that a rank $d$ bundle $E$ splits as
$E'\oplus\mathcal O_X$ if and only if $c_d(E)=0$ in $\mathrm{CH}^d(X)$
\cite[Remark~3.6 and Theorem~3.7]{Murthy94}. This low dimensional picture is
special: Mohan Kumar constructed a nontrivial stably trivial rank $2$ bundle on
a smooth complex affine fourfold, and hence a nontrivial bundle with trivial
Chern classes \cite{Kumar85}.

The purpose of this paper is to extend the critical rank classification to
normal affine surfaces and threefolds. For a singular scheme, a central issue
is the choice of cohomological invariants. The motivic cohomology constructed
by Elmanto and Morrow \cite{elmantomorrow} provides a natural framework: it
agrees with classical motivic cohomology on smooth schemes, is related to
algebraic $K$-theory by a multiplicative motivic filtration $F^{\bullet}_{\mot}\mathrm{K}_*$, and satisfies the
projective bundle formula. The latter property gives motivic Chern classes
\[
c_i(E)\in\H^{2i,i}_{\mot}(X,\Z)
\]
for vector bundles on singular schemes. We show that these motivic Chern
classes retain complete information about vector bundles of critical rank,
even in the presence of singularities.

If $\Vect_n(X)$ denotes the set of isomorphism classes of rank $n$ vector
bundles on $X$, our main results are the following.

\begin{thmx}\label{thm:surfaces}
Let $X$ be a normal affine surface over an algebraically closed field. Then the
map induced by motivic Chern classes
\[
(c_1,c_2)\colon \Vect_2(X)\longrightarrow
\H^{2,1}_{\mot}(X,\Z)\times\H^{4,2}_{\mot}(X,\Z)
\]
is a bijection.
\end{thmx}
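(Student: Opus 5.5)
The plan is to reduce the classification to two classical facts about $\K_0$ of a normal affine surface, and then identify the motivic invariants with the graded pieces of the motivic filtration.

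\textbf{The two classical inputs.} For a normal affine surface $X$ over an algebraically closed field:
\begin{itemize}
\item[(a)] Cancellation (Murthy--Swan type, extended to normal surfaces by Murthy and by Srinivas, since singular points are isolated): rank $2$ bundles are determined by their class in $\K_0(X)$, i.e. $\Vect_2(X)\to\K_0(X)$ is injective. Its image is the set of classes of rank $2$, since by Serre every projective module of rank $\ge 2$ on a $2$-dimensional affine scheme splits off rank $2$.
\item[(b)] Structure of $\K_0$: we have $\K_0(X)=\Z\oplus \Pic(X)\oplus \tilde F^2$, where $\tilde F^2\K_0(X)$ is the subgroup generated by classes of smooth closed points.
\end{itemize}
This gives
\[
\Vect_2(X)\cong \{x\in \K_0(X): \mathrm{rk}\,x=2\}\cong \Pic(X)\times \tilde F^2\K_0(X),
\qquad E\mapsto (\det E,\ [E]-[\det E]-[\mathcal O_X]).
\]
So $\Vect_2(X)$ is a torsor-like product of two groups, and it remains to match these with $\H^{2,1}_{\mot}$ and $\H^{4,2}_{\mot}$ via $c_1$ and $c_2$.

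\textbf{Identifying the motivic groups.} For $c_1$, use the Elmanto--Morrow computation $\H^{2,1}_{\mot}(X,\Z)\cong\Pic(X)$, compatible with $c_1$ via the projective bundle formula; $c_1$ of a line bundle is its class. For $c_2$, use the motivic filtration $F^\bullet_{\mot}\K_0(X)$. Since $X$ has dimension $2$, the Atiyah--Hirzebruch type spectral sequence degenerates in the relevant range. Vanishing of $\H^{p,q}_{\mot}$ for $p>q+\dim X$, together with cdh-type vanishing bounds on the singular affine surface, gives
\[
F^2_{\mot}\K_0(X)\cong \H^{4,2}_{\mot}(X,\Z).
\]
The identification is induced by $c_2$ up to a sign, as is standard for the first nontrivial graded piece, where Chern classes are additive. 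In addition, $F^1/F^2\cong \H^{2,1}=\Pic$ and $F^3\K_0=0$. One then checks that $F^2_{\mot}\K_0(X)$ coincides with the subgroup $\tilde F^2$ generated by smooth closed points. To do this, show $\H^{4,2}_{\mot}(X)$ is generated by images of pushforwards from smooth closed points, by a localization or cdh-descent argument away from the finite singular locus, and that these correspond to $[\mathcal O_x]$.

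\textbf{Assembling the bijection.} For $E$ of rank $2$ with $L=\det E$, put $y=[E]-[L]-[\mathcal O_X]\in F^2$. Then $c_1(E)=c_1(L)$, and by the Whitney formula $c_2(E)=c_2(L\oplus\mathcal O)+c_2(y)=c_2(y)$, with $c_2$ additive on $F^2$. Hence $(c_1,c_2)$ corresponds to the bijection above followed by the isomorphisms $\Pic\cong \H^{2,1}$ and $F^2\cong \H^{4,2}$. This gives both injectivity, by cancellation, and surjectivity: given $(L,\alpha)$, take $y\in F^2$ with $c_2(y)=\alpha$, and realize $[L]+[\mathcal O]+y$ by a rank $2$ bundle via Serre splitting.

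\textbf{Main obstacle.} I expect the hard step to be the identification $F^2_{\mot}\K_0(X)\cong \H^{4,2}_{\mot}(X,\Z)$ via $c_2$, together with its comparison to the smooth-point subgroup used in the cancellation theorem. This requires controlling the cdh/pro-cdh contributions in Elmanto--Morrow cohomology at the singular points. The first attempt would be the localization sequence for the finite singular locus $Z$ combined with normality, using that $Z$ has dimension zero, so its higher $\K$-theory does not affect weight $2$ in degree $4$.
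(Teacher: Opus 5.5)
\textbf{Verdict: the skeleton is right, but the one non-formal step is missing.}

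Your overall architecture is the paper's. Cancellation reduces the problem to $\K_0$, Serre splitting gives existence, and rank and determinant peel off the first two graded pieces. Vanishing plus boundedness give $\pi_0F^3_{\mot}\K(X)=\pi_{-1}F^3_{\mot}\K(X)=0$, hence $\pi_0F^2_{\mot}\K(X)\cong\H^{4,2}_{\mot}(X,\Z)$.

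The gap is the step that is not formal: that this isomorphism is $\pm c_2$ on a \emph{singular} normal surface. Implicitly you also need that $p_0$ is the rank and $p_1$ is $c_1$. You call this ``standard'', but it is standard only for smooth schemes, where the motivic filtration is the slice filtration.

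Your fallback does not work. For the finite singular locus $Z$, the groups $\K_{0,Z}(X)$ and $\H^{*,2}_{\mot,Z}(X,\Z)$ are governed by the singular local rings of $X$ along $Z$, not by $Z$ itself. There is no purity or d\'evissage for perfect complexes supported at a singular point, so ``$Z$ has dimension zero'' gives no control.

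Your phrase ``degenerates in the relevant range'' also hides a real input. Injectivity of $\pi_0F^2_{\mot}\K(X)\to\K_0(X)$ needs $\K_1(X)\to\H^{1,1}_{\mot}(X,\Z)\cong\mathcal O(X)^\times$ to be onto. The paper checks this by comparing with the smooth locus, using normality.

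\textbf{The missing idea: multiplicativity of the filtration.}
\begin{enumerate}
\item Normality gives $\Pic(X)\hookrightarrow\Pic(X_{\mathrm{smooth}})$, so $p_1=c_1$ follows from the smooth case.
\item By the splitting principle (pulling back to a flag bundle, which is still normal), write $V=L_1\oplus L_2$. Then $[\det V]+1-[V]=([L_1]-1)([L_2]-1)$.
\item Compatibility of $p_\bullet$ with products gives $p_2([\det V]+1-[V])=c_1(L_1)c_1(L_2)=c_2(V)$. This is where injectivity of $\pi_0F^i_{\mot}\K(X)\to\K_0(X)$ for $i\le 2$ enters.
\item Induction on rank via $[W]-[\det W]-n=([V]-[\det V]-(n-1))-([L]-1)([\det V]-1)$ gives $p_2=-c_2$ on all of $F^2_{\mot}\K_0(X)$.
\end{enumerate}

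Once $p_0=\mathrm{rk}$ and $p_1=c_1$ are known, your input (b) and the comparison with the smooth-point subgroup are superfluous. One gets $F^2_{\mot}\K_0(X)=\ker(\mathrm{rk},\det)$ directly. The map $x\mapsto(\det x,\,x-[\det x]-[\mathcal O_X])$ then identifies rank-$2$ classes with $\Pic(X)\times\ker(\mathrm{rk},\det)$.

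The route you sketch for that comparison is showing $\H^{4,2}_{\mot}(X,\Z)$ is generated by smooth points via localization or cdh descent. That is the surface analogue of Theorem~\ref{thm:third-piece-intro}. You do not carry it out, and it is not needed.
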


Since $\H^{2,1}_{\mot}(X,\Z)\simeq\Pic(X)$, the first invariant is simply the
determinant. Thus a rank $2$ bundle on $X$ is uniquely determined by its
determinant and second motivic Chern class, and these invariants may be
prescribed independently.

\begin{thmx}\label{thm:threefolds}
Let $X$ be a normal affine threefold over an algebraically closed field. Then
the map induced by motivic Chern classes
\[
(c_1,c_2,c_3)\colon \Vect_3(X)\longrightarrow
\H^{2,1}_{\mot}(X,\Z)\times\H^{4,2}_{\mot}(X,\Z)
\times\H^{6,3}_{\mot}(X,\Z)
\]
is a bijection.
\end{thmx}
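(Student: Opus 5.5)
Write $d=3$. The argument combines three inputs: the reduction of bundles of rank $3$ on an affine threefold to $\K_0$, the motivic filtration on $\K_0(X)$, and the abstract's statement that $F^3_{\mot}\K_0(X)$ is generated by classes of smooth closed points and is identified with $\H^{6,3}_{\mot}(X,\Z)$ by $c_3$.

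The first step is a cancellation statement: the map $\Vect_3(X)\to \K_0(X)$ is injective, and its image is the set of rank $3$ classes. Surjectivity onto rank $3$ classes follows from Bass--Serre splitting for affine $X$ of dimension $3$. Injectivity is the Suslin-type cancellation: over an algebraically closed field, projective modules of rank $d$ over a $d$-dimensional affine algebra are cancellative (Suslin's theorem, which needs no regularity). So it suffices to show that $(c_1,c_2,c_3)$ restricted to the rank $3$ part of $\K_0(X)$ is a bijection onto the product of the three motivic cohomology groups.

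Next, filter $\tilde\K_0(X)=F^1_{\mot}$ by the motivic filtration. I would show three things.
\begin{enumerate}
\item $\det$ identifies $F^1/F^2$ with $\Pic(X)=\H^{2,1}_{\mot}$.
\item $c_2$ identifies $F^2/F^3$ with $\H^{4,2}_{\mot}$, via the graded pieces $\gr^i_{\mot}\K_0=\H^{2i,i}_{\mot}$ coming from the collapsing spectral sequence in low weights.
\item $c_3$ identifies $F^3$ with $\H^{6,3}_{\mot}$, which is the stated main input.
\end{enumerate}
For $F^4\K_0=0$ I would use a dimension or vanishing argument for $\H^{2i,i}_{\mot}$ with $i>3$ on a $3$-dimensional affine scheme. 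The key point is that $c_i$ restricted to $F^i$ equals $(-1)^{i-1}(i-1)!$ times the graded-piece map. That factor is $1$ for $i=1$, $-1$ for $i=2$, and $2$ for $i=3$. The factor $2$ is the main obstacle: $c_3$ on $F^3$ is twice the edge map. Presumably the paper's identification of $F^3$ with $\H^{6,3}$ is stated directly via $c_3$. If it is not, I would use the fact that $\H^{6,3}_{\mot}(X)$ is generated by smooth points and is divisible: it is a quotient of a Chow group of zero-cycles on an affine variety over an algebraically closed field. Being torsion-free would follow from a Roitman-type theorem, and together with divisibility this makes multiplication by $2$ bijective. I would hope this is among the results established earlier.

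Finally, I would prove bijectivity by dévissage, using the nonlinearity of the Chern classes. For surjectivity, given $(a,b,c)$, first choose a class $x_1=[L]-1$ with $c_1=a$. Then adjust by $y\in F^2$ to reach $c_2=b$; the Whitney formula shows $c_2(x_1+y)=c_2(x_1)\pm\gr(y)$ modulo nothing. Then adjust by $z\in F^3$ to reach $c_3$, since $c_3(x+z)=c_3(x)+c_3(z)$ when $z\in F^3$, because the products of $F^3$ with $F^{\ge1}$ land in $F^4=0$. Injectivity follows from the same triangular structure: if two classes have equal Chern classes, their difference lies successively in $F^2$ and then in $F^3$, and there it is detected by $c_3$. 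Throughout I need multiplicativity of the filtration and the fact that $c_i$ is additive modulo products, both of which come from the Elmanto--Morrow framework. Normality is used to ensure that the singular locus is finite and that Suslin cancellation applies, and to ensure the generation by smooth points.
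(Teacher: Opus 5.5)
Your proposal is correct and follows the paper's route. Suslin cancellation and Serre splitting reduce the problem to $\K_0$, and the graded maps on $F^1_{\mot}$ and $F^2_{\mot}$ are $c_1$ and $-c_2$. On $F^3_{\mot}$ the paper shows that $c_3$ followed by the edge map $\epsilon_3$ is multiplication by $2$: it uses $F^3_{\mot}\K_0(X)=F^3_{\mathrm{sm}}\K_0(X)$ to reduce to Grothendieck's computation at a smooth point, then inverts $2$ via unique divisibility of $F^3_{\mathrm{sm}}\K_0(X)$ (Krishna). After that, the triangular injectivity and surjectivity argument is exactly yours.

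Two minor corrections:
\begin{itemize}
\item $c_3(x+z)=c_3(x)+c_3(z)$ for $z\in F^3_{\mot}\K_0(X)$ holds by the Whitney formula because $c_1(z)=c_2(z)=0$, not because products land in $F^4$.
\item Normality of a threefold only makes the singular locus of codimension $\geq 2$; it can be a curve, not a finite set. The paper uses normality for the low-degree identifications and for the Bertini step in proving $F^3_{\mot}=F^3_{\mathrm{sm}}$, not for Suslin cancellation.
\end{itemize}
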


The proofs proceed through the motivic filtration on $\mathrm{K}_0(X)$, drawing
inspiration from the strategy of Mohan Kumar and Murthy \cite{Mohan82} while
developing new arguments for the motivic filtration and the singular setting.
Serre's splitting theorem and Suslin's cancellation theorem \cite{Suslin77}
reduce the classification in rank equal to the dimension to a computation in
$\mathrm{K}_0$. We then compare the successive invariants supplied by the motivic
filtration with Chern classes. The first two are rank and determinant, while
the projection
\[
p_2\colon F^2_{\mot}\mathrm{K}_0(X)\longrightarrow F^2_{\mot}\mathrm{K}_0(X)/F^3_{\mot}\mathrm{K}_0(X)\simeq \H^{4,2}_{\mot}(X,\Z)
\]
is $-c_2$. In dimension two, motivic vanishing completes the argument.

Our main technical result in dimension three is a concrete geometric
description of the final nonzero piece of the motivic filtration. If
$F^3_{\mathrm{sm}}\mathrm{K}_0(X)$ denotes the subgroup generated by smooth closed
points, we prove the following comparison.

\begin{thmx}\label{thm:third-piece-intro}
Let $X$ be a normal affine threefold over an algebraically closed field. Then
\[
F^3_{\mot}\mathrm{K}_0(X)=F^3_{\mathrm{sm}}\mathrm{K}_0(X).
\]
\end{thmx}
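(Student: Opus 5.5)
We prove the two inclusions separately. For $F^3_{\mathrm{sm}}\mathrm{K}_0(X)\subseteq F^3_{\mot}\mathrm{K}_0(X)$, let $x\in X$ be a smooth closed point and let $U\subseteq X$ be an open smooth neighbourhood of $x$. The class $[\mathcal O_x]$ is supported on $x$, so it is the image of a class in $\mathrm{K}_0$ with supports in $\{x\}$. Excision identifies this with the corresponding group for $U$. On the smooth scheme $U$ the motivic filtration agrees with the classical one, which is the filtration by codimension of support up to the usual comparison (Riemann–Roch without denominators, or Levine's results). So $[\mathcal O_x]$ lies in $F^3$ of the supported $\mathrm{K}$-theory. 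Since the filtration is functorial for the forget-supports map, it lies in $F^3_{\mot}\mathrm{K}_0(X)$. Another route is to note that, for a point, one can produce a Koszul resolution on $U$, lift it to a perfect complex on $X$, and use multiplicativity: $[\mathcal O_x]$ is locally a product of three classes $1-[\mathcal O(-D_i)]$ lying in $F^1$. Either route gives the easy inclusion.

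For the reverse inclusion, we use that $\dim X=3$ and $X$ is affine, so $F^4_{\mot}\mathrm{K}_0(X)=0$ by motivic vanishing. Hence $F^3_{\mot}\mathrm{K}_0(X)\cong \H^{6,3}_{\mot}(X,\Z)$, with the identification given (up to sign and the factor $2$ in $c_3$) by the third Chern class. It therefore suffices to show two things.
\begin{enumerate}
\item $\H^{6,3}_{\mot}(X,\Z)$ is generated by the images of classes of smooth closed points.
\item The composite $F^3_{\mathrm{sm}}\to F^3_{\mot}\to \H^{6,3}_{\mot}$ is an isomorphism, or at least surjective.
\end{enumerate}
Surjectivity then gives equality, because the map $F^3_{\mot}\to\H^{6,3}_{\mot}$ is already injective.

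For (1), let $Z=X_{\mathrm{sing}}$, which is finite since $X$ is normal of dimension $3$. Take a neighbourhood and consider the localization/cdh description of Elmanto–Morrow cohomology, $\H^{6,3}_{\mot}(X)\to\H^{6,3}_{\mot}(X\setminus Z)$, together with the weight-$3$ cohomology near $Z$. Weight-$3$ degree-$6$ cohomology with supports at, or pro-cdh-local at, finitely many points of a $3$-fold should be generated by zero-cycles on $X\setminus Z$ moved off $Z$. Concretely, I would use the pro-cdh/cdh fibre square for a resolution $\tilde X\to X$ with exceptional locus $E$ over $Z$. Since $E$ has dimension $\le 2$, the cdh part $\H^{6,3}_{\mathrm{cdh}}$ restricted to $E$ and to $Z$ vanishes. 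This leaves $\mathrm{CH}^3(\tilde X)$ modulo contributions from $E$, plus a derived/pro-nilpotent correction term in weight $3$. Points on $\tilde X$ can be moved off $E$ by the moving lemma on the smooth variety $\tilde X$, hence they come from smooth points of $X$.

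\textbf{Main obstacle.} The delicate step is controlling the non-cdh correction, namely the fibre of $\H^{6,3}_{\mot}\to\H^{6,3}_{\mathrm{cdh}}$. I would try first to express it through derived de Rham / cyclic homology of the henselizations at $Z$, in weight $3$ and degree $6$. Its classes should be killed, or realised by point classes, using the $\mathrm{K}$-theoretic description: a class in $F^3_{\mot}\mathrm{K}_0$ maps to $0$ in $\mathrm{KH}$ modulo points. One would then use Weibel-type arguments on $\mathrm{K}_0$ of local rings at $Z$, where $\mathrm{K}_0$ is $\Z$, to show that the nil-part contributes nothing in $\mathrm{K}_0$ of an affine threefold beyond what is seen away from $Z$.

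For (2), once generation is known, the surjectivity needed is immediate. Applying $c_3$ to $[\mathcal O_x]$ gives $\pm 2[x]$ with the standard sign, so I would rather use the filtration isomorphism $\gr^3$ directly, under which $[\mathcal O_x]$ maps to $[x]$ via the smooth computation on $U$.
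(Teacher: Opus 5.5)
\textbf{There is a genuine gap in the reverse inclusion.} Your easy inclusion $F^3_{\mathrm{sm}}\K_0(X)\subseteq F^3_{\mot}\K_0(X)$ is fine. It is essentially the paper's lemma $\K_0(\{x\};X)\subset F^d_{\mot}\K_0(X)$: excision to the smooth locus plus purity at a point.

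First, a preliminary point. The step ``$F^4_{\mot}\K_0(X)=0$, hence $F^3_{\mot}\K_0(X)\cong\H^{6,3}_{\mot}(X,\Z)$'' does not follow. Vanishing gives $\pi_0F^3_{\mot}\K(X)\cong\H^{6,3}_{\mot}(X,\Z)$. Injectivity of $\pi_0F^3_{\mot}\K(X)\to\K_0(X)$ is a separate statement; its kernel is the image of a boundary map out of $\H^{3,2}_{\mot}(X,\Z)$. The paper proves it in Proposition~\ref{prop:edgeisomorphism}, and only via torsion-freeness of $\H^{6,3}_{\mot}$. Likewise, injectivity of $c_3$ is, in the paper, a \emph{consequence} of the theorem.

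You do not need either fact. The edge map $\pi_0F^3_{\mot}\K(X)\to F^3_{\mot}\K_0(X)$ is surjective by definition. So it suffices to show that $\H^{6,3}_{\mot}(X,\Z)$ is generated by the images of the groups $\H^{6,3}_{\mot,\{x\}}(X,\Z)\simeq\Z$ for smooth closed points $x$. That reduction is legitimate; it is the route attributed to Morrow in the remark following the theorem. But this generation statement is the entire content of the theorem, and your step (1) does not establish it. Three things go wrong.

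(i) The singular locus of a normal threefold need not be finite, since normality only gives codimension $\geq 2$. For example, $\mathbb{A}^1\times\Spec k[x,y,z]/(xy-z^2)$ is singular along a line. So the picture of finitely many henselizations at $Z$ is wrong from the start. The paper's own proof has to make its Bertini curve avoid a possibly one-dimensional singular locus.

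(ii) In the cdh part, the abstract blow-up sequence gives $\H^{5,3}_{\mathrm{cdh}}(E)\to\H^{6,3}_{\mathrm{cdh}}(X)\to\mathrm{CH}^3(\tilde X)\to 0$. Here $\H^{5,3}$ of the surface $E$ is not in the vanishing range. So the classes coming from $E$ form a \emph{subgroup} that must itself be shown to be generated by smooth points; you cannot quotient it away. Moving zero-cycles on $\tilde X$ off $E$ only handles the quotient $\mathrm{CH}^3(\tilde X)$.

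(iii) You flag the non-cdh correction as the main obstacle, but you leave it at ``should be killed or realised by point classes''. This is precisely the non-$\mathbb{A}^1$-invariant part of $\H^{6,3}_{\mot}$. The fact that local rings have $\K_0=\Z$ gives no control over global degree-$6$ classes there. Closing it requires a genuine theorem, e.g.\ $\H^{6,3}_{\mot}(X,\Z)\simeq\H^3_{\mathrm{Nis}}(X,\mathbf{K}^{\mathrm M}_3)$ together with Kato--Saito, as in the paper's remark.

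The paper avoids analysing the singularities cohomologically altogether:
\begin{enumerate}
\item It writes $\alpha=[P]-[\mathcal{O}_X^3]$ with $c_1(P)=c_2(P)=0$.
\item It uses Swan's Bertini twice to find finitely many points and an integral curve $\overline Y$, all in the smooth locus, off which $\alpha$ dies in $\K_0$.
\item After subtracting point classes, it uses injectivity of $\pi_0F^3_{\mot}\K\to\K_0$ on $X$ and on $X\smallsetminus\overline Y$ (Proposition~\ref{prop:edgeisomorphism} and Remark~\ref{rmk:edgeisomorphism}, resting on Propositions~\ref{prop:torsionfree} and \ref{cor:XminusTtf}). This lifts $\alpha$ to $\pi_0F^3_{\mot}\K(X/(X\smallsetminus\overline Y))$.
\item Excision computes that group on the smooth locus, where its image consists of classes supported on closed points of $\overline Y$.
\end{enumerate}
The price of that route is exactly the injectivity statement your approach could dispense with.
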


Besides its role in the classification theorem, this identifies an abstractly
defined motivic filtration piece with a concrete group generated by smooth
closed points. This group is the image of the Levine--Weibel Chow group of
zero cycles under the cycle class map. It is natural to expect a further
identification with the appropriate Euler class group in the spirit of \cite[Theorem 1.6]{Krishna19} or \cite{Schlichting15}, which would place the
splitting criterion for rank $3$ bundles directly in an Euler class framework.

As a further consequence, we solve the existence problem for rank $2$ bundles
with prescribed first and second motivic Chern classes.

\begin{corx}\label{cor:rank-two-intro}
Let $X$ be a normal affine threefold over an algebraically closed field. Then
the map
\[
(c_1,c_2)\colon \Vect_2(X)\longrightarrow
\H^{2,1}_{\mot}(X,\Z)\times\H^{4,2}_{\mot}(X,\Z)
\]
is surjective.
\end{corx}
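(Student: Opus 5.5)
Given a pair $(L,\alpha)\in\Pic(X)\times\H^{4,2}_{\mot}(X,\Z)$, I would build a rank $2$ bundle with $c_1=L$ and $c_2=\alpha$ in two stages: first realise the pair in $\K_0$, then cut the rank down. The inputs are Theorem~\ref{thm:threefolds}, Theorem~\ref{thm:third-piece-intro} and Serre's splitting theorem.

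\emph{Realising a rank $3$ bundle.} By Theorem~\ref{thm:threefolds} there is a rank $3$ bundle $E$ with
\[
(c_1,c_2,c_3)(E)=(L,\alpha,0).
\]
By Murthy-type reasoning we expect $c_3(E)=0$ to force $E\simeq E'\oplus\mathcal O_X$; then $E'$ has rank $2$ and, by the Whitney formula, $c_1(E')=L$ and $c_2(E')=\alpha$. So the whole problem reduces to a splitting criterion: a rank $3$ bundle on a normal affine threefold with $c_3=0$ splits off a trivial summand. This criterion is the main obstacle, since Murthy's theorem is stated only for smooth varieties.

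\emph{Proving the splitting criterion.} This is where Theorem~\ref{thm:third-piece-intro} enters. Choose a generic section $s$ of $E$, or of $E\otimes$ a sufficiently ample twist, and then untwist. Because $X$ is normal, its singular locus has dimension at most $1$, so by a Bertini-type argument the zero locus $Z(s)$ is a reduced finite set of smooth closed points. The Koszul complex then gives
\[
\lambda_{-1}(E^\vee)=\pm[\mathcal O_{Z(s)}]\in F^3_{\mathrm{sm}}\K_0(X)=F^3_{\mot}\K_0(X).
\]
I would next identify $F^3_{\mot}\K_0(X)$ with $\H^{6,3}_{\mot}(X,\Z)$ via $c_3$ up to sign; the abstract says exactly this. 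Under that identification the class above is $c_3(E)=0$, so $[\mathcal O_{Z(s)}]=0$ in $\K_0$.

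\emph{Removing the zeros of $s$.} I would then pass to the Levine--Weibel Chow group of smooth points. On an affine variety over an algebraically closed field that group is torsion-free and injects into $\K_0$; for normal varieties this is Krishna's theorem (Roitman-type results together with Levine's injectivity $\mathrm{CH}^d_{LW}\to\K_0$). Hence $Z(s)$ is rationally equivalent to $0$, and the Mohan Kumar--Murthy moving argument, which works near smooth points, lets us modify $s$ to a nowhere-vanishing section. That gives the required splitting $E\simeq E'\oplus\mathcal O_X$.

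\emph{Alternative route avoiding the Euler-class step.} Work directly in $\K_0$. Any class $x\in F^2_{\mot}\K_0(X)$ is, modulo $F^3$, hit by $[\mathcal O_Y]$ for curves $Y$ constructed via Serre's correspondence, which produce rank $2$ bundles with prescribed $c_2$ modulo $F^3$. The remaining $F^3$ discrepancy consists of smooth points by Theorem~\ref{thm:third-piece-intro}, and these can be absorbed by a further Serre construction along a local complete intersection curve passing through those points. I would try the first route first, since its only difficult step is the torsion-freeness and injectivity input.
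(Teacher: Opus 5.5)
Your first route is essentially the paper's proof. You realise $(a_1,a_2,0)$ by Theorem~\ref{thm:threefolds} and check that the Koszul class $\lambda_{-1}$ of $E$ lies in $F^3_{\mot}\K_0(X)$. The paper gets this from $c_1=c_2=0$ via Lemma~\ref{lm:determinant} and Proposition~\ref{prop:secondchernclass}; your Bertini section vanishing at smooth points also works. You then conclude that the class is zero because $c_3$ is injective on $F^3_{\mot}\K_0(X)$, and finish with \cite[Theorem~3.8]{Murthy94}, which your Levine--Weibel/Mohan Kumar--Murthy paragraph is unpacking rather than citing.

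One small correction: by the splitting principle, $c_3(\lambda_{-1}(E^\vee))=2c_3(E)$, not $c_3(E)$; it is $p_3=\epsilon_3^{-1}$ that sends this class to $c_3(E)$. This is harmless here since $c_3(E)=0$.
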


Indeed, Theorem~\ref{thm:threefolds} realizes any prescribed pair by a rank
$3$ bundle with vanishing third Chern class, and the splitting criterion yields
a trivial line summand. Whether these two classes also determine a rank $2$
bundle uniquely on a singular affine threefold remains an open question.

Finally, we expect normality to be an artifact of our proofs rather than a
feature of the results.

\subsection*{Structure of the paper}
In Section~\ref{sec: mot coh} we recall motivic cohomology and its associated
filtration on algebraic $K$-theory. Section~\ref{sec:Chern} discusses motivic
Chern classes, and Sections~\ref{sec:properties-filtration} and
\ref{sec:second} identify the low degree pieces of the filtration and prove
the surface classification. Section~\ref{sec:third-piece} proves the key
comparison theorem for the third filtration piece and uses it to establish
the complete threefold classification and the rank $2$ existence theorem.







\subsection*{Notation and conventions}
$k$ is a fixed base field, which we assume to be algebraically closed. We denote by $\Sch_k^{\op}$ the category of qcqs schemes over a field $k$. Affine schemes are always assumed to be of finite type over the base field.

\subsection*{Acknowledgements}
The authors thank Matthew Morrow for answering their questions about the motivic filtration and motivic cohomology.

The authors are grateful to the Institut de Mathématiques de Jussieu --- Paris Rive Gauche and CNRS for the research visit of Jean Fasel to Paris that allowed to pursue this project.

\section{Recollection on the motivic filtration and motivic cohomology}\label{sec: mot coh}

In recent years, progress has been made in defining and analyzing motivic cohomology of singular schemes. The starting point of the construction performed in \cite{elmantomorrow} is the so-called \emph{motivic filtration} $F^\bullet \mathrm{K}$, which can be defined as a functor
\[
F^{\bullet}_{\mot}\mathrm{K}\colon  \Sch_k^{\op}\to \mathrm{CAlg}(\mathrm{FilSp}).
\]
In other words, there exists a functorial filtration on the $K$-theory spectrum which associates to any qcqs scheme $X$ over $k$ a filtered $S^1$-spectrum $F^{\bullet}_{\mot}\mathrm{K}(X)$, together with a multiplication  
\[
F^{i}_{\mot}\mathrm{K}(X)\otimes F^{j}_{\mot}\mathrm{K}(X)\to F^{i+j}_{\mot}\mathrm{K}(X).
\] 
We note that $F^{0}_{\mot}\mathrm{K}(X)=\mathrm{K}(X)$, and we set $F^{j}_{\mot}\mathrm{K}_i(X):=\mathrm{Im}(\pi_iF^{j}_{\mot}\mathrm{K}(X)\to \pi_i\mathrm{K}(X))$ for any $i\in \Z,$ and $j\in\N$. By definition, $F^{j}_{\mot}\mathrm{K}_i(X)\subset \mathrm{K}_i(X)$.

This filtration allows one to build exact triangles of the form 
\begin{equation}\label{eqn:filteredtriangle}
F^{j+1}_{\mot}\mathrm{K}\to F^{j}_{\mot}\mathrm{K}\to \Z(j)_\mot[2j]
\end{equation}
for any $j\geq 0$, where the right-hand term is defined below.

\begin{defn}
The \textit{motivic complex of weight $j$} is the Nisnevich sheaf 
\[
\Z(j)_\mot \colon \Sch_k^{\op} \to \D(\Z),
\] 
defined in \cite[Theorem 1.1(1)]{elmantomorrow}.
The \textit{motivic cohomology group of degree $i$ and weight $j$} of a $k$-scheme $X$  is the group 
\[
\mathrm{H}^{i,j}_\mot(X, \Z) := \mathrm{H}^i(\Z(j)_\mot(X)).
\]
\end{defn}

In particular, for any $k$-scheme $X$, the triangles \eqref{eqn:filteredtriangle} yield a functorial, multiplicative \textit{motivic filtration}  $F^\bullet_{\mot} \mathrm{K}(X)$ on the (non-connective) algebraic $K$-theory $\mathrm{K}(X)$ such that the graded pieces are naturally given by 
    \[\gr^j F^\bullet_{\mot} \mathrm{K}(X) \simeq \Z(j)_\mot(X)[2j]\]
for $j \geq 0$. This gives the motivic spectral sequence
\[ E_2^{ij}(X) = \mathrm{H}^{i-j,-j}_\mot(X, \Z) \Rightarrow \mathrm{K}_{-i-j}(X).\]
Whenever $X$ has finite valuative dimension, the filtration is complete and the spectral sequence is convergent \cite[Theorem 1.1(1)]{elmantomorrow}. Let us now list a few important properties of motivic cohomology that we will use:
\begin{enumerate}
\item(weight $0$) There is a natural weak equivalence $\Z_{\mot}(0)(X)\simeq \mathrm{R}\Gamma_{\mathrm{cdh}}(X,\Z)$, yielding natural isomorphisms 
\[
\H^{i,0}_{\mot}(X,\Z)\simeq \H_{\mathrm{cdh}}^i(X,\Z).
\]
\item(weight $1$) There is a natural map
\[
\mathrm{R}\Gamma_{\mathrm{Nis}}(X,\mathbb{G}_m)[-1]\to \Z_{\mot}(1)
\]
which is an equivalence in degrees $\leq 3$. In particular, for $i\leq 2$
\[
\H^{i-1}_{\mathrm{Nis}}(X,\mathbb{G}_m)\simeq \H^{i,1}_{\mot}(X,\Z)
\]
and thus
\begin{equation}
\mathrm{H}^{i,1}_{\mathrm{mot}}(X,\Z)\simeq\begin{cases} 0 & \text{ if $i\leq 0$.} \\ \mathcal{O}_X(X)^\times & \text{ if $i=1$.} \\ \mathrm{Pic}(X) & \text{ if $i=2$.}\end{cases}
\end{equation}
\item(vanishing) If $X$ is a Noetherian equicharacteristic scheme of finite dimension $d$, $\H^{i,j}_\mot
(X, \Z) = 0$ for all $i > j + \mathrm{dim}(X)$ (\cite[Theorem 1.13]{elmantomorrow}). 
\item (smooth case) If $X$ is smooth over a field $k$, there is for any $j\geq 0$ a natural weak-equivalence
\[
\Z_{\mot}(j)(X)\simeq z^j(X,\bullet)[-2j],
\]
where the right-hand side is Bloch's cycle complex. In particular, there are isomorphisms
\[
\H^{i,j}_{\mot}(X,\Z)\simeq \H_{\mathrm M}^{i,j}(X,\Z)
\]
for any $i,j\in\N$ where the right-hand side is Voevodsky's motivic cohomology. Furthermore, the motivic filtration $F^{\bullet}_{\mot}\mathrm{K}(X)$ coincides with Voevodsky's slice filtration on $\mathrm{K}(X)$.
\item(relation to \'etale cohomology) For any integer $\ell$ prime to the characteristic of $k$, and any qcqs scheme over $k$, there are natural equivalences
\[
\tau^{\leq j}(\Z_{\mot}(j)(X)/\ell)\simeq \tau^{\leq j}\mathrm{R}\Gamma_{\mathrm{et}}(X,\mu_\ell^{\otimes j})
\] 
for any $j\in \N$, yielding isomorphisms $\H_{\mot}^{p,j}(X,\Z/\ell)\simeq \H^p_{\mathrm{et}}(X,\mu_\ell^{\otimes j})$ for $p\leq j$.
\end{enumerate}


%

We will also use the following results.

\begin{lm}\label{lm:rankmap}
Let $X$ be a separated scheme of finite type over an algebraically closed field $k$.
If $X$ is seminormal, then
\[
\H^0_{\mathrm{cdh}}(X,\mathbb{Z}) \simeq \mathbb{Z}^{\pi_0(X)}.
\]
In particular, if $X$ is further connected, then
\[
\H^0_{\mathrm{cdh}}(X,\mathbb{Z}) \simeq \mathbb{Z}.
\]
\end{lm}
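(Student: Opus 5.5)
We aim to show that for a seminormal separated $X$ of finite type over $k$, the global sections of the cdh sheafification of the constant sheaf $\Z$ are exactly the locally constant $\Z$-valued functions on the Zariski topological space of $X$. There is an obvious injection $\Z^{\pi_0(X)}\to \H^0_{\mathrm{cdh}}(X,\Z)$. Surjectivity is the content of the statement.

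The first step is to identify the cdh sheaf associated with $\Z$ concretely. We show that a section of $\Z_{\mathrm{cdh}}$ over $X$ amounts to a locally constant function on some proper cdh cover. By resolution of singularities, or de Jong/Gabber alterations in positive characteristic (cdh covers with $\ell$-prime control suffice for $H^0$ of a constant sheaf, or simply use the $h$-topology comparison), such a cover can be refined to an abstract blow-up square. Concretely, for $H^0$ we only need one fact: a section over $X$ is a function $s$ on the points of $X$. This holds because cdh covers are in particular $h$-covers, hence universally submersive and surjective on points, and $\Z$ is already a cdh sheaf on the category of reduced schemes when viewed as ``functions on points constant along fibers''. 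Thus $H^0_{\mathrm{cdh}}(X,\Z)$ embeds into the functions $|X|\to\Z$ that become locally constant after pullback along some proper surjective $p\colon Y\to X$. We may take $Y$ to be a disjoint union of normalizations of the irreducible components.

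The second step is to show that such a function $s$ is locally constant on $X$. Since $p$ is proper and surjective, it is a submersive (quotient) map of topological spaces. For each $n$, the set $s^{-1}(n)$ has preimage $p^{-1}(s^{-1}(n))$, which is open and closed in $Y$. Submersivity then gives that $s^{-1}(n)$ is open and closed in $X$, so $s$ is locally constant. This is the heart of the argument.

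The remaining subtlety, which we expect to be the main obstacle, is justifying that cdh sections really are determined by values on points and glue as described. In particular, the identification must not introduce extra sections from the non-reduced or non-seminormal structure. We would handle this by noting that $\Z_{\mathrm{cdh}}(X)=\Z_{\mathrm{cdh}}(X_{\mathrm{red}})$ and by using the standard description of cdh $H^0$ via the equalizer for a proper cdh cover $Y\to X$ together with $Y\times_XY$. Matching on $Y\times_XY$ says precisely that the function is constant on fibers, and fibers over closed points are detected since $k$ is algebraically closed. Seminormality enters only in guaranteeing that the points of $X$ correspond to the $k$-points of its seminormalization, so no identification of branches is lost. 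In fact, for $H^0$ the statement holds for $X_{\mathrm{red}}$ in general, and the seminormal hypothesis is harmless.

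The final claim follows at once: if $X$ is connected, then $\pi_0(X)$ is a point.
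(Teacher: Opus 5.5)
Your argument is correct in substance, but it takes a different route from the paper.

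\textbf{The paper's route.} It quotes Huber--Kelly's description $h_Y^{\mathrm{cdh}}(X)\simeq \mathrm{Hom}_k(X^{\mathrm{sn}},Y)$ \cite{HuberKellyPCII}, which is where seminormality enters. This gives $\Gamma_{\mathrm{cdh}}(X,\underline A)=\mathrm{Map}(\pi_0(X),A)$ for finite sets $A$. It then reduces $\Z$ to the finite case by refining a trivializing cdh cover to a finite one (Nisnevich over proper, \cite[Proposition 12.28]{mvw}), so that a section takes only finitely many values.

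\textbf{Your route.} You argue by topological descent, and this is sound once stated precisely.
\begin{enumerate}
\item The presheaf $X\mapsto \mathrm{Fun}(|X|,\Z)$ is a sheaf for any topology with jointly surjective covering families, because $|U\times_X V|\to |U|\times_{|X|}|V|$ is surjective.
\item The constant presheaf maps to it injectively on nonempty schemes. By exactness of sheafification, $\H^0_{\mathrm{cdh}}(X,\Z)$ is therefore the set of functions on $|X|$ that are constant on the members of some finite cdh cover. This also gives the injectivity of $\Z^{\pi_0(X)}\to \H^0_{\mathrm{cdh}}(X,\Z)$.
\item Such a cover is universally submersive, since \'etale maps are open and proper surjective maps are closed. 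Hence these functions are Zariski locally constant.
\end{enumerate}

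\textbf{What each buys.} Your route is elementary and self-contained. It shows the statement holds for every $X$ of finite type, and even for the $h$-topology, so seminormality plays no role. The paper's route is shorter on the page but rests on a substantial external theorem. Its use of seminormality is in fact cosmetic, since $X^{\mathrm{sn}}\to X$ is a homeomorphism and so $\pi_0(X^{\mathrm{sn}})=\pi_0(X)$.

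\textbf{Remarks to correct.} Several side remarks in your write-up are wrong or irrelevant and should be dropped.
\begin{itemize}
\item No resolution of singularities or alterations are needed. Alterations are not cdh covers, and ``$\ell$-prime control'' is meaningless for $\Z$-coefficients.
\item The disjoint union of normalizations of the irreducible components is in general not a cdh cover. For the Whitney umbrella $x^2=y^2z$, the normalization $\mathbb{A}^2\to X$, $(u,v)\mapsto(uv,v,u^2)$, maps the line $v=0$ onto the singular line via $u\mapsto u^2$. The residue field extension at the generic point of the singular line therefore has degree $2$, so the map is not completely decomposed. Nor is this choice needed: take $Y$ to be the trivializing cdh cover itself, or the proper part of a refinement of it.
\item The sentence relating seminormality to $k$-points of the seminormalization has no content. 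As you yourself observe, the hypothesis is simply unnecessary.
\end{itemize}
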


\begin{proof}
We use the description of representable cdh-sheaves in terms of seminormalization.
By \cite[Theorem 1.4]{HuberKellyPCII}, for separated finite type $k$-schemes $X,Y$, one has
\[
h_Y^{cdh}(X) \simeq \operatorname{Hom}_k(X^{\mathrm{sn}},Y),
\]
where $X^{\mathrm{sn}}$ denotes the seminormalization of $X$. Since $X$ is supposed to be seminormal, 
\[
h_Y^{cdh}(X) \simeq \operatorname{Hom}_k(X,Y)
\]
 for any separated finite type $k$-scheme $Y$. Now let $A$ be a finite set, and set
\[
Y_A := \coprod_{a \in A} \Spec(k).
\]
Then the constant cdh-sheaf $\underline{A}$ is represented by the cdh-sheafification
of $h_{Y_A}$, so
\[
\Gamma_{\mathrm{cdh}}(X,\underline{A})
= h_{Y_A}^{\mathrm{cdh}}(X)
\simeq \operatorname{Hom}_k(X,Y_A).
\]
But a morphism $X \to Y_A$ is the same as a locally constant map from $X$ to the
discrete set $A$, hence
\[
\operatorname{Hom}_k(X,Y_A) \simeq \operatorname{Map}(\pi_0(X),A).
\]
Thus
\begin{equation}\label{eqn:finitecase}
\Gamma_{\mathrm{cdh}}(X,\underline{A}) \simeq \operatorname{Map}(\pi_0(X),A).
\end{equation}

Now consider the obvious map $ \operatorname{Map}(\pi_0(X),\Z)\to \Gamma_{\mathrm{cdh}}(X,\Z)$, which is injective by construction. To prove that it's also onto, take any section $s\in \Gamma_{\mathrm{cdh}}(X,\Z)$. By definition, there is a cdh cover $X=\cup_{i\in I}U_i$ such that $s$ is equal to some integer $n_i\in \Z$ on $U_i$. By \cite[Proposition 12.28]{mvw}, the covering $\cup_{i\in I}U_i\to X$ admits a refinement of the form $V\to X'\xrightarrow{p} X$, where $p$ is a proper morphism (and a cdh covering) and $V\to X'$ is a Nisnevich covering. The scheme $X'$ being finite type over $k$, it is actually quasi-compact and we can find a finite refinement of $V\to X'$. Altogether, we may suppose that $\cup_{i\in I}U_i\to X$ is finite, and thus the global section $s$ takes only finitely many values. We deduce from \eqref{eqn:finitecase} that it's then in the image of $ \operatorname{Map}(\pi_0(X),\Z)\to \Gamma_{\mathrm{cdh}}(X,\Z)$.
\end{proof}

\begin{prop}\label{prop:torsionfree}
Let $X$ be an affine threefold over an algebraically closed field $k$. Then, the group $\H_{\mot}^{6,3}(X,\Z)$ is torsion free.
\end{prop}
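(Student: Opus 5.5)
Fix a prime $\ell$. We show that multiplication by $\ell$ on $\H^{6,3}_{\mot}(X,\Z)$ is injective. Multiplication by $\ell$ on the motivic complex gives a cofiber sequence $\Z(3)_\mot\xrightarrow{\ell}\Z(3)_\mot\to \Z(3)_\mot/\ell$. Its long exact sequence contains
\[
\H^{5,3}_{\mot}(X,\Z)\to \H^{5,3}_{\mot}(X,\Z/\ell)\to \H^{6,3}_{\mot}(X,\Z)\xrightarrow{\ell}\H^{6,3}_{\mot}(X,\Z).
\]
So the $\ell$-torsion of $\H^{6,3}_{\mot}(X,\Z)$ is a quotient of $\H^{5,3}_{\mot}(X,\Z/\ell)$. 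It therefore suffices to show that $\H^{5,3}_{\mot}(X,\Z/\ell)=0$ for every prime $\ell$, or at least that $\H^{5,3}_{\mot}(X,\Z)\to\H^{5,3}_{\mot}(X,\Z/\ell)$ is surjective. The vanishing property gives $\H^{7,3}_{\mot}=0$, so the relevant range is exactly degrees $5$ and $6$.

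\textbf{Case $\ell$ invertible in $k$.} Property (5) only identifies mod-$\ell$ motivic cohomology with étale cohomology in degrees $\le j=3$, so it does not reach degree $5$ directly. I would instead use the finer structure of the Elmanto--Morrow construction. Away from the characteristic, $\Z(j)_\mot/\ell$ is the cdh-sheafification of the classical motivic complex, i.e. $\mathrm{R}\Gamma_{\mathrm{cdh}}(X,\Z(j)^{\mathrm{cla}}/\ell)$. By Suslin rigidity over the algebraically closed field $k$, this is étale cohomology $\mathrm{R}\Gamma_{\mathrm{cdh}}(-,\tau^{\le j}R\epsilon_*\mu_\ell^{\otimes j})$, via the Beilinson--Lichtenbaum truncation applied cdh-locally on smooth schemes, which is available by resolution of singularities or Gabber's alterations.

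Granting this description, we have a spectral sequence $\H^p_{\mathrm{cdh}}(X,\mathcal{H}^q_{\mathrm{et}}(\mu_\ell^{\otimes 3}))$ with $q\le 3$. We need these terms to vanish when $p+q=5$. Since $X$ is affine of dimension $3$, cdh cohomological dimension is at most $3$, which kills the terms with $p\ge 4$. For $q\ge 2$, the remaining terms with $p+q=5$ are governed by the affine Lefschetz/Artin vanishing $\H^n_{\mathrm{et}}(X,\mu_\ell^{\otimes 3})=0$ for $n>3$. Concretely, I would compare $\Z(3)_\mot/\ell$ with $\tau^{\le 3}\mathrm{R}\Gamma_{\mathrm{et}}$ and use that the cofiber $\tau^{>3}$ contributes nothing in degree $5$, because Artin vanishing on affines passes through cdh-descent. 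This last step is the one I expect to be the main obstacle: controlling the cdh-sheafified truncation in degree $5$.

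\textbf{Case $\ell=p=\mathrm{char}(k)$.} Here $\Z(j)_\mot/p$ is built from syntomic cohomology $\Z/p(j)^{\mathrm{syn}}$ glued with cdh-local data. In weight $3$, on a scheme of dimension $3$, the mod $p$ complex is concentrated in degrees $\le j+1=4$: the syntomic part satisfies this bound, and the cdh part is a cdh-sheafified $W\Omega^j_{\log}$ term of cohomological dimension $\le\dim$ shifted by $j$. Using $\H^{5,3}_{\mot}(X,\Z/p)=\H^{2}$ of the logarithmic de Rham--Witt part, I would invoke the fact that $\H^{i}(X,\Omega^3_{\log})$ vanishes for $i\ge 2$ on an affine $X$ of dimension $3$ with $k$ perfect. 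This holds because $\Omega^3_{\log}=\ker(1-C^{-1})$ on quasicoherent sheaves, whose higher Zariski cohomology vanishes on affines, leaving only a cokernel term in degree $1$.

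Combining the two cases gives $\H^{5,3}_{\mot}(X,\Z/\ell)=0$ for all primes $\ell$, hence torsion-freeness. I expect the genuine difficulty to lie in making the comparison with étale cohomology valid in degree $5$ for singular $X$, since property (5) alone does not cover this degree.
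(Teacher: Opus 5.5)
\textbf{The case $\ell$ invertible.} Here you follow the paper's proof: reduce to $\H^{5,3}_{\mot}(X,\Z/\ell)=0$, identify $\Z(3)_{\mot}/\ell$ with the cdh-local theory, and then with $L_{\mathrm{cdh}}\tau^{\le 3}R\Gamma_{\mathrm{et}}(-,\mu_\ell^{\otimes 3})$, using Elmanto--Morrow and the singular Beilinson--Lichtenbaum theorem of Bachmann--Elmanto--Morrow. Then remove the truncation by Artin vanishing.

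Drop the spectral-sequence detour. It does not work as stated, because the terms $\H^2_{\mathrm{cdh}}(X,\mathcal H^3)$ and $\H^3_{\mathrm{cdh}}(X,\mathcal H^2)$ are not individually killed by Artin vanishing. The step you call the main obstacle is in fact immediate. The presheaves $U\mapsto \H^q_{\mathrm{et}}(U,\mu_\ell^{\otimes 3})$ vanish for $q\ge 4$ on affine $U$ of dimension $\le 3$, so their cdh sheafifications vanish on $X$. Hence $L_{\mathrm{cdh}}\tau^{\le 3}R\Gamma_{\mathrm{et}}(X)\simeq L_{\mathrm{cdh}}R\Gamma_{\mathrm{et}}(X)\simeq R\Gamma_{\mathrm{et}}(X)$, since étale cohomology with these coefficients is already a cdh sheaf. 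Therefore $\H^{5,3}_{\mot}(X,\Z/\ell)\cong \H^5_{\mathrm{et}}(X,\mu_\ell^{\otimes 3})=0$. That is the paper's entire argument. Note that the paper proves the statement only in characteristic $0$ and defers the general case to Elmanto--Morrow.

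\textbf{The case $\ell=p=\mathrm{char}(k)$.} This part has a genuine gap, for two reasons.

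First, the claimed bound (concentration in degrees $\le 4$) contradicts your own description of the cdh part. A cdh-sheafified $W\Omega^3_{\log}[-3]$ term of cdh cohomological dimension $\le 3$ can contribute up to degree $6$.

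Second, you justify the vanishing of $\H^2(X,\Omega^3_{\log})$ through the sequence $0\to\Omega^3_{\log}\to\Omega^3\xrightarrow{1-C^{-1}}\Omega^3/d\Omega^2\to 0$. That sequence is exact only étale-locally. Mod $p$ motivic cohomology is Zariski/cdh-local and truncated at degree $j$: already for smooth $X$ one has $\Z(3)_{\mot}/p\simeq R\Gamma_{\mathrm{Zar}}(X,\Omega^3_{\log})[-3]$, not its étale counterpart. The cofiber of that truncation is the Artin--Schreier sheaf $R^1\epsilon_*\Omega^3_{\log}$ placed in degree $4$, and it does not vanish on local rings. So the mechanism that works for $\ell\neq p$, where Artin vanishing kills $\tau^{>3}$ locally, has no analogue here.

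Concretely, for smooth affine $X$ the Zariski-exact sequence $0\to\Omega^3_{\log}\to\Omega^3\xrightarrow{1-C}\Omega^3\to R^1\epsilon_*\Omega^3_{\log}\to 0$ gives $\H^{5,3}_{\mot}(X,\Z/p)\cong\H^2_{\mathrm{Zar}}(X,\Omega^3_{\log})\cong\mathrm{coker}\bigl(\Omega^3(X)\to \H^0(X,R^1\epsilon_*\Omega^3_{\log})\bigr)$. Nothing in your argument shows this cokernel vanishes. The singular case is harder still, since cdh-sheafified differentials are not quasi-coherent and can have higher cohomology on affines.

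As written, your proposal therefore establishes the statement in characteristic $0$, exactly as the paper does, and the prime-to-$p$ torsion in characteristic $p$ provided the cited comparison results hold there. The $p$-primary torsion in characteristic $p$ is not handled.
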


\begin{proof}
For the convenience of the reader, we provide a proof (almost identical to the smooth case) when $\mathrm{char}(k)=0$. The general case will appear in the work of Elmanto-Morrow. Let $p$ be a prime number, and consider the exact sequence
\[
\H^{5,3}_{\mot}(X,\Z/p\Z)\to \H^{6,3}_{\mot}(X,\Z)\xrightarrow{p}\H^{6,3}_{\mot}(X,\Z)
\]
induced by multiplication by $p$. It suffices to show that $\H^{5,3}_{\mot}(X,\Z/p\Z)=0$ to prove that $\H^{6,3}_{\mot}(X,\Z)$ is $p$-torsion free. Now, we know from \cite[Theorem 4.10, 4.]{elmantomorrow} that there is a canonical isomorphism $\H^{5,3}_{\mot}(X,\Z/p\Z)\simeq \H^{5,3}_{\mathrm{cdh}}(X,\Z/p\Z)$, where the right-hand side is the cdh-local motivic cohomology developed in \cite{BEM25}. On the other, the singular extension of the Beilinson–Lichtenbaum equivalence obtained in \cite[Corollary 7.20]{BEM25} provides us with an isomorphism (choosing a primitive $p$-th root of unity)
\[
\Z_{\mathrm{cdh}}(3)/(p)\simeq L_{\mathrm{cdh}}\tau^{\leq 3}R\Gamma_{\mathrm{et}}(-,\Z/p\Z).
\]
By affine Artin vanishing \cite[Chapter 6, Theorem 7.2]{Milne:1980}, $\H_{\mathrm{et}}^j(X,\Z/p\Z)=0$ for $j\geq 4$, and consequently the Zariski sheaf associated to $U\mapsto \H^j_{\mathrm{et}}(X,\Z/p\Z)$ vanishes for $j\geq 4$. The associated cdh-sheaf also vanishes and therefore
\[
L_{\mathrm{cdh}}R\Gamma_{\mathrm{et}}(-,\Z/p\Z)\simeq L_{\mathrm{cdh}}\tau^{\leq 3}R\Gamma_{\mathrm{et}}(-,\Z/p\Z).
\] 
Finally, observe that $R\Gamma_{\mathrm{et}}(-,\Z/p\Z)$ is already cdh-local by \cite[Chapter 59, Lemma 59.105.1]{stacks} and therefore 
\[
\H^{5,3}_{\mot}(X,\Z/p\Z)\simeq \H_{\mathrm{et}}^5(X,\Z/p\Z)=0.
\]
\end{proof}


Finally, we will make use of motivic cohomology groups with support.

\begin{defn}
Let $X$ be a qcqs scheme, and let $Y\subset X$ be a closed subset (viewed as a scheme endowed with the reduced structure). We set $U:=X\smallsetminus Y$ and consider the Nisnevich sheaf $\Z[X/U]\colon \Sch_k^{\op}\to \D(\Z)$ that allows one to obtain fiber sequences
\[
\Z_{\mot}(j)(X/U)\to \Z_{\mot}(j)(X)\to \Z_{\mot}(j)(U)
\]  
and 
\[
F_{\mot}^j\mathrm{K}(X/U)\to F_{\mot}^j\mathrm{K}(X)\to F_{\mot}^j\mathrm{K}(U)
\]
for any $j\geq 0$.
We then set 
\[
 F_{\mot}^j\mathrm{K}_{i,Y}(X):=\mathrm{Im}(\pi_iF_{\mot}^j\mathrm{K}(X/U)\to \pi_i\mathrm{K}(X/U))
\]
and 
\[
\mathrm{H}_{\mot,Y}^{2j-i,j}(X,\Z):=\pi_i(\Z_{\mot}(j)(X/U)).
\]
We will also make use of the following groups:
\[
 F_{\mot}^j\mathrm{K}_{i}(Y;X):=\mathrm{Im}(F_{\mot}^j\mathrm{K}_{i,Y}(X)\to F_{\mot}^j\mathrm{K}_{i}(X)),
\]
and 
\[
\mathrm{H}_{\mot}^{2j-i,j}(Y;X):=\mathrm{Im}(\mathrm{H}_{\mot,Y}^{2j-i,j}(X,\Z)\to \mathrm{H}_{\mot}^{2j-i,j}(X,\Z)).
\]
\end{defn}

If $X$ is a qcqs scheme over a perfect field $k$, we consider $X_{\mathrm{smooth}}\subset X$ the (possibly empty) open subscheme of the smooth locus. If $x\in X_{\mathrm{smooth}}$ is a closed point, we can consider the bicartesian square
\[
\xymatrix{X_{\mathrm{smooth}}\cap (X\smallsetminus\{x\})\ar[r]\ar[d] & X\smallsetminus\{x\}\ar[d] \\
X_{\mathrm{smooth}}\ar[r] & X}
\]
which provides us with a cartesian square
\[
\xymatrix{\mathrm{K}(X)\ar[r]\ar[d] & \mathrm{K}(X\smallsetminus\{x\})\ar[d] \\
\mathrm{K}(X_{\mathrm{smooth}})\ar[r] & \mathrm{K}(X_{\mathrm{smooth}}\cap (X\smallsetminus\{x\}))}
\]
(since $\mathrm{K}$ is Zariski local). In particular, we obtain 
\[
\mathrm{K}_{0}(X/(X\smallsetminus\{x\}))\simeq \mathrm{K}_0(X_{\mathrm{smooth}}/(X_{\mathrm{smooth}}\smallsetminus \{x\}))\simeq \Z
\]
by purity (for $X_{\mathrm{smooth}}$) and then a homomorphism
\begin{equation}\label{eqn:varphix}
\varphi_x\colon \Z\to \mathrm{K}_{0}(\{x\};X)\subset \mathrm{K}_{0}(X).
\end{equation}

\begin{lm}
Suppose that $X$ is of (pure) dimension $d$ and that $x$ is a closed point in the smooth locus. Then $\mathrm{K}_{0}(\{x\};X)\subset F^d_{\mot}\mathrm{K}_0(X)$.
\end{lm}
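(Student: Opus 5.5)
Since every step works in $\mathrm{K}_0$ with supports, I will write $X\smallsetminus x$ for $X\smallsetminus\{x\}$ and $X_s$ for $X_{\mathrm{smooth}}$. The plan is to reduce to the smooth case, where the motivic filtration is the slice filtration, i.e.\ the classical coniveau/gamma-type filtration. By excision the relative groups of $X$ and of $X_s$ relative to the complement of $x$ agree, for the filtered pieces as well as for $\mathrm{K}$ itself.

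The key observation is that the bicartesian square of the paper is a Zariski cover. The functors $F^j_{\mot}\mathrm{K}$ are Nisnevich, hence Zariski, sheaves, and so are the graded pieces $\Z(j)_{\mot}$ by \cite{elmantomorrow}. Applying $F^d_{\mot}\mathrm{K}$ to the square therefore gives a cartesian square, and hence an equivalence
\[
F^d_{\mot}\mathrm{K}(X/(X\smallsetminus x))\simeq F^d_{\mot}\mathrm{K}(X_s/(X_s\smallsetminus x)),
\]
compatible with the corresponding equivalence for $\mathrm{K}$ and with the maps to the total $\mathrm{K}$-theory. It therefore suffices to show that
\[
\pi_0 F^d_{\mot}\mathrm{K}(X_s/(X_s\smallsetminus x))\to \mathrm{K}_0(X_s/(X_s\smallsetminus x))\simeq\Z
\]
is surjective. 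Granting this, any class in $\mathrm{K}_{0,\{x\}}(X)$ lifts to $\pi_0F^d_{\mot}\mathrm{K}(X/(X\smallsetminus x))$. Pushing forward along the natural map of filtered spectra $F^d_{\mot}\mathrm{K}(X/(X\smallsetminus x))\to F^d_{\mot}\mathrm{K}(X)$, the image in $\mathrm{K}_0(X)$ lies in $F^d_{\mot}\mathrm{K}_0(X)$, which is exactly the claim.

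For the smooth case I would show that the maps $F^j\mathrm{K}(X_s/(X_s\smallsetminus x))\to F^{j-1}\mathrm{K}(X_s/(X_s\smallsetminus x))$ are isomorphisms on $\pi_0$ for $j\le d$, using the graded pieces $\Z(j)_{\mot}(X_s/(X_s\smallsetminus x))[2j]$. By purity for Voevodsky motivic cohomology, or equivalently localization for Bloch's cycle complexes with $x$ of codimension $d$, we have
\[
\H^{n,j}_{\mot,\{x\}}(X_s)\simeq \H^{n-2d,\,j-d}_{\mot}(k).
\]
For $j<d$ this vanishes, since the weight $j-d$ is negative. Hence $\Z(j)_{\mot}(X_s/(X_s\smallsetminus x))=0$ for $j<d$, so $F^d\to F^0$ is an equivalence on the relative term, which gives the required surjectivity.

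Alternatively, one can avoid the vanishing argument and construct the lift directly. The generator of the relative $\mathrm{K}_0$ is the Koszul class of a regular sequence $f_1,\dots,f_d$ cutting out $x$ in a neighbourhood. This class is the product of $d$ classes of the form $[\mathcal O\xrightarrow{f_i}\mathcal O]$, each lying in $F^1\mathrm{K}_0$ with support on $V(f_i)$. Multiplicativity of the filtration, in its supported version, then puts the product in $F^d$.

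The main obstacle is justifying that the filtered pieces and the relative construction $\Z[X/U]$ satisfy Zariski excision in the singular setting. I expect this to follow from the Nisnevich sheaf property asserted in the definition, applied to the open cover $X=X_s\cup(X\smallsetminus x)$, but care is needed because $F^j_{\mot}\mathrm{K}$ is only stated to be functorial. Failing that, I would use that each $\Z(j)_{\mot}$ is a Nisnevich sheaf, which is stated explicitly, together with the finite filtration on relative terms and induction on $j$.
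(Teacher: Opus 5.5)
Your argument is correct and is essentially the paper's proof. Both proofs use excision to pass to $X_{\mathrm{smooth}}$, then purity to get $\H^{*,j}_{\mot}(X/(X\smallsetminus\{x\}),\Z)\simeq\H^{*-2d,j-d}(k(x),\Z)=0$ for $j<d$, and then the fiber sequences $F^{j+1}_{\mot}\K\to F^{j}_{\mot}\K\to\Z(j)_{\mot}[2j]$ on the relative term to lift every class in $\K_{0,\{x\}}(X)$ to $\pi_0F^d_{\mot}\K(X/(X\smallsetminus\{x\}))$. The paper applies excision only to the graded pieces $\Z(j)_{\mot}$, which is your stated fallback, so the obstacle you flag is avoided and the Koszul-product alternative is unnecessary.
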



\begin{proof}
First, we observe that 
\[
\mathrm{H}^{i,j}_{\mot}(X/(X\smallsetminus\{x\}),\Z)\simeq\mathrm{H}^{i,j}_{\mot}(X_{\mathrm{smooth}}/(X_{\mathrm{smooth}}\smallsetminus\{x\}),\Z)\simeq \mathrm{H}^{i-2d,j-d}(k(x),\Z)
\]
by purity, yielding in particular $\mathrm{H}^{i,j}_{\mot}(X/(X\smallsetminus\{x\}),\Z)=0$ if $j-d<0$.  The fiber sequences
\[
F_{\mot}^{i+1}\mathrm{K}\to F_{\mot}^{i}\mathrm{K}\to \Z(i)_{\mot}[2i]
\]
yield exact sequences
\[
\pi_0F_{\mot}^{i+1}\mathrm{K}(X/(X\smallsetminus \{x\}))\to  \pi_0F_{\mot}^{i}\mathrm{K}(X/(X\smallsetminus \{x\}))\to \H_{\mot}^{2i,i}(X/(X\smallsetminus \{x\}),\Z)
\]
with a trivial right-hand group provided $i<d$. We deduce that the composite map 
\[
\pi_0F_{\mot}^{d}\mathrm{K}(X/(X\smallsetminus \{x\}))\to \pi_0F_{\mot}^{d-1}\mathrm{K}(X/(X\smallsetminus \{x\}))\to \ldots\to \mathrm{K}_0(X/(X\smallsetminus \{x\}))
\]
is onto and we conclude using the commutative diagram
\[
\xymatrix{\pi_0F_{\mot}^{d}\mathrm{K}(X/(X\smallsetminus \{x\}))\ar[r] \ar[d]& \mathrm{K}_0(X/(X\smallsetminus \{x\}))\ar[d]^-{\varphi_x}  \\
\pi_0F_{\mot}^{d}\mathrm{K}(X)\ar[r] & \mathrm{K}_0(X). }
\]
\end{proof}

\section{Motivic Chern classes}\label{sec:Chern}

Motivic cohomology satisfies the projective bundle formula, as proved in \cite[Theorem 1.1(7)]{elmantomorrow}. Consequently, there are constructed motivic Chern classes with values in motivic cohomology:
\[c_i \colon \Vect_n(X) \to \H^{2i,i}_\mot (X, \Z); \, \, i \in \N.  \]



A formal direct consequence of the projective bundle theorem is the so-called \emph{splitting principle}.

\begin{thm}[Splitting principle]
    For any qcqs scheme $X$ and any vector bundle $V$ over $X$, there exists a qcqs scheme $q\colon Y\to X$ over $X$ such that $q^*V$ is obtained as successive extensions of line bundles and the homomorphism 
\[
q^*\colon \mathrm{H}^{i,j}_{\mathrm{mot}}(X,\Z)\to \mathrm{H}^{i,j}_{\mathrm{mot}}(Y,\Z)
\]
is injective for any $i,j\in\Z$. 
\end{thm}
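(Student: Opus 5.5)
We take $Y$ to be an iterated flag bundle of $V$, built from projective bundles, and obtain injectivity from the projective bundle formula of \cite[Theorem 1.1(7)]{elmantomorrow}. We argue by induction on the rank $n$ of $V$; since $X$ is qcqs we may first work componentwise where the rank is constant, or argue with a locally constant rank throughout. If $n\le 1$, take $Y=X$ and $q=\mathrm{id}$.

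For $n\ge 2$, let $p\colon \mathbb P(V)\to X$ be the projective bundle of lines in $V$. This is a smooth projective morphism, so $\mathbb P(V)$ is again qcqs. On $\mathbb P(V)$ the tautological short exact sequence
\[
0\to \mathcal O(-1)\to p^*V\to Q\to 0
\]
has $\mathcal O(-1)$ a line bundle and $Q$ a vector bundle of rank $n-1$.

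The projective bundle formula gives a natural equivalence
\[
\Z(j)_{\mot}(\mathbb P(V))\simeq \bigoplus_{r=0}^{n-1}\Z(j-r)_{\mot}(X)[-2r],
\]
where the summand with $r=0$ is the pullback $p^*$ and the other summands are multiplication by powers of $c_1(\mathcal O(1))$. Passing to cohomology, $p^*\colon \H^{i,j}_{\mot}(X,\Z)\to \H^{i,j}_{\mot}(\mathbb P(V),\Z)$ is split injective for all $i,j\in\Z$. For negative weights we use the convention that the motivic complexes vanish, so injectivity there is trivial; we must only check that the formula is stated in a form covering every weight $j$ and every degree $i$.

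Applying the induction hypothesis to $Q$ on $\mathbb P(V)$ gives $q'\colon Y\to \mathbb P(V)$ such that $q'^*Q$ is a successive extension of line bundles and $q'^*$ is injective on all motivic cohomology groups. Set $q=p\circ q'$. Then $q^*V$ is an extension of $q'^*Q$ by the line bundle $q'^*\mathcal O(-1)$, so $q^*V$ is itself a successive extension of line bundles. Moreover $q^*=q'^*\circ p^*$ is a composite of injections, hence injective.

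The only real input is the projective bundle formula; everything else is formal. The main point to verify is that the identification of the first summand with $p^*$ holds in the generality of all qcqs $k$-schemes and all bidegrees, including torsion and negative degree groups. This requires checking that the Elmanto--Morrow statement is an equivalence of complexes, natural in $X$, and not merely a statement about some groups.
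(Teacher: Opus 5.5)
Your proof is correct, and it is the standard argument the paper has in mind when it calls the splitting principle a formal consequence of the Elmanto--Morrow projective bundle formula: pass to $\mathbb P(V)$, iterate on the tautological quotient bundle, and note that each $p^*$ is split injective because it is the $r=0$ summand of the decomposition of $\Z(j)_{\mot}(\mathbb P(V))$. The caveats you raise are harmless: that formula is an equivalence in $\D(\Z)$, so it covers all degrees including torsion; $\Z(j)_{\mot}=0$ for $j<0$; and on a quasi-compact $X$ the rank takes only finitely many values, so reducing to constant rank only involves a finite disjoint union.
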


Thus, when computing the Chern classes of a vector bundle $V$, we may always suppose that $V$ is filtered by line bundles and that its class in $\mathrm{K}_0(X)$ is a sum of line bundles.  

The following proposition follows from the construction of Chern classes.

%

\begin{prop}
    The first Chern class factors as 
\[
\mathrm{K}_0(X)\xrightarrow{\det }\mathrm{Pic}(X)\to \H_{\mot}^{2,1}(X,\Z),
\]
where the right-hand map is the isomorphism induced by the equivalence $\mathrm{R}\Gamma_{\mathrm{Nis}}(X,\mathbb{G}_m)[-1]\to \Z_{\mot}(1)$.
\end{prop}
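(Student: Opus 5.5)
The plan is to reduce to line bundles through the splitting principle, and then read off the statement from the way $c_1$ is built in the projective bundle formula. First we check that both sides are additive, so that they extend to $\mathrm{K}_0(X)$. The total Chern class satisfies the Whitney sum formula, which is a formal consequence of the projective bundle theorem via the splitting principle. Hence $c_1$ is additive on short exact sequences of vector bundles and factors through a homomorphism $\mathrm{K}_0(X)\to \H^{2,1}_{\mot}(X,\Z)$. Likewise $\det\colon \mathrm{K}_0(X)\to \Pic(X)$ is a well-defined homomorphism, since the determinant is multiplicative in short exact sequences. So it suffices to compare the two homomorphisms on classes $[V]$ of vector bundles.

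By the splitting principle, choose $q\colon Y\to X$ such that $q^*V$ has a filtration with line bundle quotients $L_1,\dots,L_n$, and such that $q^*$ is injective on $\H^{2,1}_{\mot}$. Both constructions commute with pullback. For $c_1$ this holds by naturality of the projective bundle formula; for the isomorphism $\Pic\simeq \H^{2,1}_{\mot}$ it holds because it is induced by a natural map of sheaves $\mathrm{R}\Gamma_{\mathrm{Nis}}(-,\mathbb{G}_m)[-1]\to \Z_{\mot}(1)$. Therefore it is enough to prove the claim on $Y$ for $q^*V$. There
\[
c_1(q^*V)=\sum_i c_1(L_i)\qquad\text{and}\qquad \det(q^*V)\simeq \bigotimes_i L_i .
\]
The map $\Pic\to\H^{2,1}_{\mot}$ is a group isomorphism, carrying tensor product to sum. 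We are thus reduced to the rank one statement: for a line bundle $L$, $c_1(L)$ is the image of $[L]$ under $\Pic(X)\simeq \H^{2,1}_{\mot}(X,\Z)$.

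The rank one case is where the actual content lies, and it depends on how the projective bundle formula is normalized in \cite{elmantomorrow}. There the formula for $\mathbb{P}(E)$ is stated using the class $c_1(\mathcal O(1))\in\H^{2,1}_{\mot}(\mathbb{P}(E),\Z)$. That class is defined precisely as the image of $[\mathcal O(1)]$ under the weight-one comparison map $\Pic\to\H^{2,1}_{\mot}$, i.e. the first Chern class of line bundles is defined through that comparison map. The higher Chern classes are then defined by the Grothendieck relation $\sum_i (-1)^i c_i(E)\,\xi^{n-i}=0$, with $\xi=c_1(\mathcal O(1))$ up to the sign convention. For a line bundle $E=L$ we have $\mathbb{P}(L)=X$ and $\mathcal O(1)=L^{\pm1}$ depending on the convention for $\mathbb P$. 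The relation then gives $c_1(L)=\xi$, and so $c_1(L)$ is the image of $[L]$.

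The main obstacle is keeping the conventions consistent. One has to check that the orientation used in \cite{elmantomorrow} is the one coming from the map $\mathrm{R}\Gamma_{\mathrm{Nis}}(X,\mathbb{G}_m)[-1]\to\Z_{\mot}(1)$, and not its negative. One also has to match the convention for $\mathbb{P}(E)$ (lines versus quotients) so that no dual appears. I would settle this by checking on $\mathbb{P}^1$, comparing with the smooth case where the identification with the classical Chow-theoretic $c_1$ is standard.
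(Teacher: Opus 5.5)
Your proposal is correct and matches the paper's argument. The rank one case holds by the definition of $c_1$ of a line bundle through the comparison map $\Pic(X)\simeq\H^{2,1}_{\mot}(X,\Z)$, and the general case follows from the splitting principle together with additivity of $c_1$ and multiplicativity of $\det$. Your extra care only makes explicit what the paper's three-line proof leaves implicit: you work with the filtration rather than a direct sum, check compatibility with $q^*$, and flag the sign and projectivization conventions.
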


\begin{proof}
This is true by definition if $V$ is a line bundle. Else, we may use the splitting principle to assume that $V$ is a sum of line bundles $L_1,\ldots,L_n$. The determinant is then the product $L_1\otimes\cdots\otimes L_n$ and we conclude from the fact that the first Chern class is additive. 
\end{proof}



\section{Some properties of the motivic filtration}\label{sec:properties-filtration}


By definition, the motivic filtration on $K$-theory is given by the image of the filtered pieces. More precisely, we have surjective homomorphisms $\pi_j(F_{\mot}^i\mathrm{K}(X))\to F^i_{\mathrm{mot}}\mathrm{K}_j(X)$ for any $i,j\in\Z$.

\begin{lm}\label{lem:filtration}
For $0\leq i\leq 2$ and $j\geq 0$, the homomorphism $\pi_j(F_{\mot}^i\mathrm{K}(X))\to F^i_{\mathrm{mot}}\mathrm{K}_j(X)$ is an isomorphism in case $X$ is normal.
\end{lm}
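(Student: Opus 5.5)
We prove injectivity of $\pi_j F^i_{\mot}\mathrm{K}(X)\to \mathrm{K}_j(X)$ for $j\geq 0$ by induction on $i$. The map factors as
\[
\pi_jF^i_{\mot}\mathrm{K}(X)\to\pi_jF^{i-1}_{\mot}\mathrm{K}(X)\to\cdots\to\pi_jF^0_{\mot}\mathrm{K}(X)=\mathrm{K}_j(X).
\]
Surjectivity onto the image holds by definition, so it suffices to show that each transition map $\pi_jF^{m+1}_{\mot}\mathrm{K}(X)\to\pi_jF^{m}_{\mot}\mathrm{K}(X)$ is injective for $m=0,1$ and $j\geq 0$. The triangle \eqref{eqn:filteredtriangle} gives an exact sequence
\[
\pi_{j+1}F^m_{\mot}\mathrm{K}(X)\to \H^{2m-j-1,m}_{\mot}(X,\Z)\xrightarrow{\partial}\pi_jF^{m+1}_{\mot}\mathrm{K}(X)\to\pi_jF^m_{\mot}\mathrm{K}(X).
\]
Injectivity is therefore equivalent to surjectivity of $\pi_{j+1}F^m_{\mot}\mathrm{K}(X)\to\H^{2m-j-1,m}_{\mot}(X,\Z)$, that is, of the edge map to the graded piece in degree $j+1\geq 1$.

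\emph{Case $m=0$.} We need $\mathrm{K}_{j+1}(X)\to\H^{-j-1,0}_{\mot}(X,\Z)\simeq\H^{-j-1}_{\mathrm{cdh}}(X,\Z)$ to be surjective. For $j\geq 0$ the target sits in negative cohomological degree, so it vanishes and there is nothing to prove. Hence $\pi_jF^1_{\mot}\mathrm{K}(X)\to\mathrm{K}_j(X)$ is injective for all $j\geq0$. In particular, the case $j=-1$ is not needed. That case would require $\mathrm{K}_0\to\H^0_{\mathrm{cdh}}$ to be surjective, which is where Lemma~\ref{lm:rankmap} and normality would enter; the rank map is onto $\Z^{\pi_0(X)}$ because normal implies seminormal.

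\emph{Case $m=1$.} We need $\pi_{j+1}F^1_{\mot}\mathrm{K}(X)\to\H^{1-j,1}_{\mot}(X,\Z)$ to be surjective for $j\geq0$. By the weight one computation, the target is $0$ for $j\geq 2$ and $\mathcal O_X(X)^\times$ for $j=0$.

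\begin{itemize}
\item For $j=1$ the target is $\H^{0,1}_{\mot}=0$, so there is nothing to check.
\item For $j=0$ we must show that $\pi_1F^1_{\mot}\mathrm{K}(X)\to\mathcal O(X)^\times$ is onto. By the $m=0$ step, $\pi_1F^1_{\mot}\mathrm{K}(X)$ is the kernel of $\mathrm{K}_1(X)\to\H^{1}_{\mathrm{cdh}}(X,\Z)$, or at least it surjects onto that kernel. Given a unit $u$, its class $[u]\in\mathrm{K}_1(X)$, coming from $\mathbb G_m\to \mathrm{GL}_1$, is functorial. We compare with $\Spec\Z[t^{\pm1}]$ (or $\mathbb G_{m,k}$), which is smooth: there the motivic filtration is the slice filtration, $[t]$ lifts to $\pi_1F^1_{\mot}\mathrm{K}$, and its image in $\H^{1,1}=\mathcal O^\times$ is $t$. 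Pulling back along $u\colon X\to\mathbb G_m$ and using naturality of the filtration and of the weight one identification produces a preimage of $u$.
\end{itemize}

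This gives injectivity for $i=2$ and all $j\geq0$.

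The main obstacle is ensuring that the weight one identification $\H^{1,1}_{\mot}\simeq\mathcal O^\times$ is compatible with the edge map $\mathrm{gr}^1\mathrm{K}_1\to\H^{1,1}$, i.e.\ that the determinant/unit map matches. We would settle this by naturality from the smooth case $\mathbb G_m$, where it is classical. It is also worth double-checking whether normality is genuinely needed for $j\geq0$ or only through Lemma~\ref{lm:rankmap} in neighbouring statements. If the intended proof handles $j=-1$ or uses $F^1\mathrm{K}_0=\ker(\mathrm{rank})$, normality enters precisely via $\H^0_{\mathrm{cdh}}(X,\Z)=\Z^{\pi_0(X)}$ and the surjectivity of the rank map.
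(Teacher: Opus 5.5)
Your proof is correct. Its skeleton matches the paper's: the transition maps are injective, $\H^{-1-j,0}_{\mot}(X,\Z)$ and $\H^{1-j,1}_{\mot}(X,\Z)$ vanish for the relevant $j$, and the case $i=2$, $j=0$ reduces to surjectivity of $\pi_1F^1_{\mot}\K(X)\to\H^{1,1}_{\mot}(X,\Z)\simeq\mathcal O_X(X)^\times$. The two arguments differ only in how that last surjectivity is obtained.

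\textbf{The paper's route.} It identifies the edge map $\K_1(X)\to\mathcal O_X(X)^\times$ with the determinant. It does so by restricting to the smooth locus $U$, where this is the classical statement. Normality then gives $\mathcal O_X(X)=\mathcal O_U(U)$, so restriction of units is injective and the map on $X$ is pinned down.

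\textbf{Your route.} You pull back the generator $t$ of $\mathcal O(\mathbb G_{m,k})^\times$ along the morphism $u\colon X\to\mathbb G_{m,k}$ classifying a unit. This uses naturality of the filtration, of the defining triangles, and of the weight-one comparison map. The smooth input is the same as the paper's, applied to $\mathbb G_{m,k}$ instead of $U$. Even if the edge map there were $\det$ only up to a universal sign, nothing breaks, since the image is a subgroup.

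\textbf{What each buys.} Your argument needs no hypothesis on $X$. So your suspicion at the end is right: normality is superfluous for this lemma. The paper's argument proves something stronger, namely that the edge map equals $\det$ on all of $\K_1(X)$. Yours computes it only on classes of units, which is all surjectivity requires.

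\textbf{Two small corrections.}
\begin{itemize}
\item The filtration is a functor on $k$-schemes, so you must use $\mathbb G_{m,k}$, not $\Spec\Z[t^{\pm1}]$.
\item In your sentence about $\pi_1F^1_{\mot}\K(X)$, the relevant graded term is $\H^{-1,0}_{\mot}(X,\Z)=\H^{-1}_{\mathrm{cdh}}(X,\Z)=0$, not $\H^{1}_{\mathrm{cdh}}(X,\Z)$. Hence $\pi_1F^1_{\mot}\K(X)\simeq\K_1(X)$. Your argument never uses this sentence, so nothing breaks.
\end{itemize}
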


\begin{proof}
If $i=0$, there is nothing to prove. For $i=1$, it suffices to show that the maps
\[
\pi_j(F^1_{\mot}\mathrm{K}(X))\to F^1_{\mathrm{mot}}\K_j(X)
\]
are injective. The kernel of this map is controlled by $\pi_{j+1}\Z_{\mathrm{mot}}(0)[0](X)=\mathrm{H}_\mathrm{mot}^{-1-j,0}(X,\Z)=0$ if $j\geq 0$. 

For $i=2$, we have  an exact sequence
\[
\mathrm{H}_\mathrm{mot}^{2-(1+j),1}(X,\Z)\to \pi_j(F_{\mot}^2\K(X))\to \pi_j(F_{\mot}^1\K(X))
\]
and the left hand term vanishes in case $j\geq 1$. We conclude in that case using the fact that $\pi_j(F^1\K(X))\to \K_j(X)$ is injective. We are thus left with $j=0$, in which case we have a diagram of the form
\[
\xymatrix{\pi_1(F_{\mot}^1\K(X))\ar[r]\ar[d] & \mathrm{H}_\mathrm{mot}^{1,1}(X,\Z)\ar[r] & \pi_0(F_{\mot}^2K(X))\ar[r] & \pi_0(F_{\mot}^1\K(X)) \\
\K_1(X) & & & }
\]
in which the vertical arrow is an isomorphism since $\mathrm{H}_\mathrm{mot}^{i,0}(X,\Z)=0$ for $i<0$. We are thus left to show that the map 
\[
\K_1(X)\to \mathrm{H}_\mathrm{mot}^{1,1}(X,\Z)\simeq\mathcal{O}_X(X)^\times
\]
is onto, which follows from the fact that it is the determinant map. Indeed, this is true if $X$ is smooth. Now, $X$ being normal implies that the restriction map $\mathcal{O}_X(X)\to \mathcal{O}_U(U)$ is an isomorphism, and consequently $\mathcal{O}_X(X)^\times=\mathcal{O}_U(U)^\times$. We conclude using the commutative diagram  
\[
\xymatrix{\K_1(X)\ar[r]\ar[d] & \mathrm{H}_\mathrm{mot}^{1,1}(X,\Z)\ar[r]^-{\sim}\ar[d] & \mathcal{O}_X(X)^\times\ar@{=}[d] \\ 
\K_1(U)\ar[r] & \mathrm{H}_\mathrm{mot}^{1,1}(U,\Z)\ar[r]^-{\sim} & \mathcal{O}_U(U)^\times \\ }
\]
\end{proof}

We will use the following lemma later.

\begin{lm}\label{lem:exhaustive}
If $X$ is a noetherian scheme of dimension $d$ over a field $k$ and $i\geq d$, $\pi_j(F_{\mot}^i\K(X))=0$ for $j<0$. If further $i>d$, then $\pi_0(F_{\mot}^i\K(X))=0$ as well.
\end{lm}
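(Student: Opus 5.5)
The plan is to use that the motivic filtration is complete for $X$ noetherian of finite dimension $d$ (finite valuative dimension suffices), and to read off vanishing of homotopy from the vanishing of motivic cohomology. Completeness means $\lim_n F^n_{\mot}\K(X)\simeq 0$. For $i\leq n$, the cofiber of $F^{n}_{\mot}\K(X)\to F^{i}_{\mot}\K(X)$ has a finite filtration with graded pieces $\Z(m)_{\mot}(X)[2m]$ for $i\leq m<n$. We have
\[
\pi_j\bigl(\Z(m)_{\mot}(X)[2m]\bigr)=\H^{2m-j,m}_{\mot}(X,\Z).
\]
The vanishing property (3) kills this group as soon as $2m-j>m+d$, i.e.\ $m>d+j$.

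\textbf{Step 1: the graded pieces vanish in the relevant range.} Take $j<0$ and $m\geq i\geq d$. Then $m-j>m\geq d$, so $2m-j>m+d$ and $\pi_j$ of every graded piece $\Z(m)_{\mot}(X)[2m]$ with $m\geq i$ is zero. We also need $\pi_{j+1}$ of the graded pieces in order to control the $\lim^1$ term below. For $j+1\leq 0$ the same argument applies verbatim when $j+1<0$. When $j+1=0$ we need $m>d$, which can fail at $m=d$. For $j=0$ and $i>d$, every $m\geq i$ satisfies $m>d$, so $\pi_0$ of each graded piece vanishes.

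\textbf{Step 2: pass to the limit.} I would write $F^i_{\mot}\K(X)=\operatorname{fib}(F^i\to \lim_n F^n)$, which gives the Milnor sequence
\[
0\to {\lim_n}^1\pi_{j+1}\operatorname{fib}(F^n\to F^i)\to \pi_j F^i_{\mot}\K(X)\to \lim_n \pi_j\operatorname{fib}(F^n\to F^i)\to 0 .
\]
Equivalently, I would use $\pi_j F^i\simeq \pi_j\lim_n \operatorname{cofib}(F^n\to F^i)$. By Step 1 and induction on $n$, each $\pi_j\operatorname{cofib}(F^n\to F^i)$ vanishes for $j$ in the given range, so the $\lim$ term is zero. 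For the $\lim^1$ term, the transition maps $\pi_{j+1}\operatorname{cofib}(F^{n+1}\to F^i)\to \pi_{j+1}\operatorname{cofib}(F^{n}\to F^i)$ are surjective for $n$ large. The obstruction to surjectivity is $\pi_{j}\Z(n)_{\mot}(X)[2n]$, which vanishes for $n>d+j$. Mittag-Leffler then holds, and $\lim^1=0$.

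\textbf{Main obstacle.} This is exactly the $\lim^1$/Mittag-Leffler bookkeeping, together with making sure that the vanishing range in (3) applies uniformly in the weight $m$ (it does, since the bound $i>j+\dim X$ is independent of weight aside from the shift). If completeness in the strong sense is available (the tower is pro-equivalent to zero), the argument collapses to the long exact sequences of Step 1. I would first try to cite \cite[Theorem 1.1(1)]{elmantomorrow} for this, and fall back on the Mittag-Leffler argument otherwise.
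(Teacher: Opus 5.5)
Your proof is correct, but it passes to the limit differently from the paper.

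\textbf{The paper's route.} It works directly with the tower $\pi_jF^n_{\mot}\K(X)$. The sequences $F^{n+1}_{\mot}\K\to F^n_{\mot}\K\to\Z(n)_{\mot}[2n]$, together with $\H^{2n-j,n}_{\mot}(X,\Z)=0$ for $n>d+j$, show that every map $\pi_jF^{n+1}_{\mot}\K(X)\to\pi_jF^{n}_{\mot}\K(X)$ with $n\geq i$ is surjective. The paper then cites the \emph{boundedness} of the filtration (\cite[Theorem 4.12, Theorem 4.27]{elmantomorrow}) to get $\pi_jF^n_{\mot}\K(X)=0$ for $n\gg0$, which forces $\pi_jF^i_{\mot}\K(X)=0$.

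\textbf{Your route.} You use only completeness, writing $F^i_{\mot}\K(X)\simeq\lim_n\operatorname{cofib}(F^n\to F^i)$. The finite filtrations kill the $\lim$ term, and Mittag-Leffler kills ${\lim}^1$. The cokernels of your transition maps land in exactly the groups $\H^{2n-j,n}_{\mot}(X,\Z)$ that the paper uses, so the cohomological input is the same. The difference is the limit input:
\begin{itemize}
\item You need only the completeness statement recorded in Section 2. It applies here because noetherian schemes have valuative dimension equal to Krull dimension.
\item The paper's last step is immediate, but at the cost of a boundedness citation. Its surjectivity chain would also work with completeness alone: the Milnor sequence gives $\lim_n\pi_jF^n_{\mot}\K(X)=0$, and the limit of a tower of surjections maps onto every term.
\end{itemize}
Your bookkeeping also gives the uniform statement $\pi_jF^i_{\mot}\K(X)=0$ for all $j<i-d$, which contains both assertions of the lemma.

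\textbf{Two small slips.} First, the Milnor sequence you write with $\operatorname{fib}(F^n\to F^i)$ is off by a shift, since the limit of those fibres is $F^i_{\mot}\K(X)[-1]$. The version with the quotients $\operatorname{cofib}(F^n\to F^i)$, given in your ``equivalently'' sentence, is the correct one. Second, the discussion in Step 1 of $\pi_{j+1}$ of the graded pieces is unnecessary. Mittag-Leffler only needs $\pi_j\bigl(\Z(n)_{\mot}(X)[2n]\bigr)=0$ for large $n$, and this is exactly what Step 2 uses.
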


\begin{proof}
We start with the second assertion. We have an exact sequence 
\[
\mathrm{H}^{2d+1,d+1}_{\mot}(X,\Z)\to \pi_0(F_{\mot}^{d+2}\K(X))\to \pi_0(F_{\mot}^{d+1}\K(X))\to \mathrm{H}^{2d+2,d+1}_{\mot}(X,\Z)
\]
and the right-hand group is trivial by vanishing. Repeating the process, we see that the composite map 
\[
\pi_0(F_{\mot}^{d+l}\K(X))\to \ldots \to \pi_0(F_{\mot}^{d+2}\K(X)) \to \pi_0(F_{\mot}^{d+1}\K(X))
\]
is onto for any $l\geq 2$. It suffices then to show that $\pi_0(F_{\mot}^{d+l}K(X))=0$ for $l$ large enough. This follows from the fact that the filtration is bounded \cite[Theorem 4.12, Theorem 4.27]{elmantomorrow}.

We now prove the first assertion, starting with the exact sequence
\[
\mathrm{H}^{2d-j+1,d}_{\mot}(X,\Z)\to \pi_j(F_{\mot}^{d+1}\K(X))\to \pi_j(F_{\mot}^{d}\K(X))\to \mathrm{H}^{2d-j,d}_{\mot}(X,\Z)
\]
Since $j<0$, we see that $\mathrm{H}^{2d-j,d}_{\mot}(X,\Z)=0$ by vanishing. Arguing in the same way, we find that all morphisms in the sequence
\[
\pi_j(F_{\mot}^{d+l}\K(X))\to\cdot\to \pi_j(F_{\mot}^{d+1}\K(X))\to \pi_j(F_{\mot}^{d}\K(X))
\]
are onto. We conclude as in the previous case using the fact that the filtration is bounded.
\end{proof}

As a consequence of these results, we can compare the multiplication maps
\[
F_{\mot}^i\K(X)\otimes F_{\mot}^j\K(X)\to F_{\mot}^{i+j}\K(X)
\]
induced by the motivic filtration with the ones in motivic cohomology.
If $X$ is any quasi-compact quasi-separated scheme over a field $k$, the map of graded commutative algebras $p_*\colon F_{\mot}^{\bullet}\K\to \Z(\bullet)[2\bullet]$ yields a commutative diagram
\[
\xymatrix{F_{\mot}^i\K(X)\otimes F_{\mot}^j\K(X)\ar[r]^-{p_i\otimes p_j}\ar[d] & \Z_X(i)[2i]\otimes \Z_X(j)[2j]\ar[d] \\
F_{\mot}^{i+j}\K(X)\ar[r]_-{p_{i+j}} & \Z_X(i+j)[2i+2j] }
\]
where the vertical maps are the multiplication maps. Taking $\pi_0(-)$ and using Lemma \ref{lem:filtration}, we obtain a commutative diagram
\begin{equation}\label{eqn:pmultiplicative}
\xymatrix{F_{\mot}^i\K_0(X)\otimes F_{\mot}^j\K_0(X)\ar[r]^-{p_i\otimes p_j}\ar[r]^-{p_i\otimes p_j}\ar[d] & \mathrm{H}_{\mathrm{mot}}^{2i,i}(X,\Z)\otimes \mathrm{H}_{\mathrm{mot}}^{2j,j}(X,\Z)\ar[d] \\
\pi_0(F_{\mot}^{i+j}\K(X))\ar[r]_-{p_{i+j}} & \mathrm{H}_{\mathrm{mot}}^{2i+2j,i+j}(X,\Z) }
\end{equation}
in case $i,j\leq 2$.

\section{First and second pieces of the filtration}\label{sec:second}

In this section, we start with the identification of the maps $p_i\colon \pi_0(F_{\mot}^i\K(X))\to \H_{\mot}^{2i,i}(X,\Z)$ for $i\leq 2$. In view of Lemma \ref{lem:filtration}, this amounts to understand the associated maps 
\[
p_i\colon F_{\mot}^i\K_0(X)\to \H_{\mot}^{2i,i}(X,\Z).
\]

\begin{lm}\label{lm:rank}
If $X$ is seminormal, the map $p_0\colon \K_0(X)\to  \H_{\mot}^{0,0}(X,\Z)$ coincides with the rank map under the isomorphism of Lemma \ref{lm:rankmap}.
\end{lm}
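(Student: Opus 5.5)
We will show that $p_0$ and the rank map are two homomorphisms $\K_0(X)\to \mathbb{Z}^{\pi_0(X)}$ that agree on classes of vector bundles; since these classes generate $\K_0(X)$, this proves the lemma. The proof is a naturality argument: we reduce to points of $X$, where both maps are computed directly.

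First, $p_0\colon \K(X)=F^0_{\mot}\K(X)\to \Z(0)_{\mot}(X)\simeq \mathrm{R}\Gamma_{\mathrm{cdh}}(X,\Z)$ is natural in $X$, and so is the rank map. Every connected component of $X$ has a closed point $x\colon \Spec k\to X$, because $k$ is algebraically closed and $X$ is of finite type. Under the isomorphism of Lemma~\ref{lm:rankmap}, restriction to the component containing $x$ followed by pullback along $x$ is the map $\H^0_{\mathrm{cdh}}(X,\Z)\to \H^0_{\mathrm{cdh}}(\Spec k,\Z)=\Z$ given by evaluation at that component. This holds because the isomorphism $\mathrm{Map}(\pi_0(X),\Z)\to \Gamma_{\mathrm{cdh}}(X,\Z)$ constructed in the proof of Lemma~\ref{lm:rankmap} is natural for morphisms of seminormal schemes, and $\Spec k$ is seminormal. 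Taken over all components, these pullbacks give an injection $\H^0_{\mathrm{cdh}}(X,\Z)\hookrightarrow \prod_{x}\Z$. The rank of a vector bundle is also computed fibrewise at such points. It therefore suffices to treat $X=\Spec k$.

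For $X=\Spec k$, which is smooth, the motivic filtration is the slice filtration by property (smooth case). The map $p_0\colon \K_0(k)\to \H^{0,0}(k,\Z)=\Z$ is then the edge map of the motivic spectral sequence in weight zero. This edge map sends $[\mathcal O]$ to $1$: in Bloch's cycle complex, $z^0(\Spec k,0)=\Z$ is generated by the fundamental class, and the unit of $\K$ maps to the unit of the graded ring $\Z(\bullet)[2\bullet]$ because $p_*$ is a map of graded commutative algebras. Since $p_0$ is a homomorphism and $\K_0(k)=\Z\cdot[\mathcal O]$, it follows that $p_0$ equals the rank (dimension) map. This is the key input, and it is immediate from multiplicativity: $p_*$ preserves units, and $\K_0(k)$ is generated by the unit.

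The main obstacle is checking the compatibility of the identification in Lemma~\ref{lm:rankmap} with pullback along $x\colon\Spec k\to X$, that is, that evaluating a cdh section at a component agrees with pulling back to a point of it. We would verify this using the representability statement $h_Y^{\mathrm{cdh}}(X)\simeq \mathrm{Hom}_k(X^{\mathrm{sn}},Y)$, which is functorial in $X$. On the constant sheaf with finite values, pulling back along $x$ composes a locally constant map $X\to Y_A$ with $x$, which is exactly evaluation on the component containing $x$. The general case follows because every section takes only finitely many values, as shown in the proof of Lemma~\ref{lm:rankmap}.
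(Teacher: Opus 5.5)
Your proof is correct, but it takes a different route from the paper.

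\textbf{The paper's route.} It reduces to $X$ connected and reduced and restricts to the dense smooth locus $U$. It then combines the injectivity of $\H^{0,0}_{\mot}(X,\Z)\to\H^{0,0}_{\mot}(U,\Z)$ with the known smooth case, where $F^\bullet_{\mot}$ is the slice filtration and $p_0$ is already known to be the rank.

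\textbf{Your route.} You detect $\H^{0,0}_{\mot}(X,\Z)\simeq\Z^{\pi_0(X)}$ on one rational point per component and compute at $\Spec k$. There the only inputs are that $p_0$ is a ring map, so it sends $[\mathcal O]$ to $1$, and that $\K_0(k)=\Z\cdot[\mathcal O]$. Your appeal to the slice filtration and Bloch's complex is therefore superfluous.

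\textbf{Comparison.} Your argument is more self-contained. It needs neither the density of the smooth locus nor the comparison theorem on smooth schemes. It uses only the multiplicativity of the filtration and the naturality of the identification in Lemma~\ref{lm:rankmap}, which you verify correctly. The paper's argument has the advantage of being the same template it uses for $p_1$ in Lemma~\ref{lm:determinant}. There, restricting to points would lose everything, since $\Pic(k)=0$, whereas $\Pic$ of a normal scheme is detected on its smooth locus.

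\textbf{A minor point.} Your opening reduction to classes of vector bundles is unnecessary. Generation of $\K_0(X)$ by vector bundles is only guaranteed under extra hypotheses, such as an ample family of line bundles, which is automatic in the affine setting of the paper. Since $x^*\alpha\in\K_0(k)=\Z$ is the fibre rank for every $\alpha\in\K_0(X)$, your pointwise argument applies directly to all classes.
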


\begin{proof}
We may assume that $X$ is connected, and reduced. The smooth locus $U$ is then dense and we conclude from the smooth case using the fact that $ \H_{\mot}^{0,0}(X,\Z)\to \H_{\mot}^{0,0}(U,\Z)$ is injective.
\end{proof}

\begin{lm}\label{lm:determinant}
If $X$ is normal, the map $p_1\colon F_{\mot}^1\K_0(X)\to \H_{\mot}^{2,1}(X,\Z)$ coincides with the first Chern class $c_1$.
\end{lm}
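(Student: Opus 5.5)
The plan is to reduce to the smooth locus, where the statement is classical, and then transport it back to $X$ by an injectivity argument, exactly as in the proof of Lemma~\ref{lm:rank}. We may assume $X$ is connected, hence integral since it is normal. Let $U\subset X$ be the smooth locus. Normality implies that the singular locus has codimension at least $2$, so $X\smallsetminus U$ has codimension $\geq 2$. Both $p_1$ and $c_1$ are natural with respect to pullback along the open immersion $j\colon U\to X$.
\begin{itemize}
\item For $p_1$, naturality holds because the motivic filtration and the triangles \eqref{eqn:filteredtriangle} are functorial.
\item For $c_1$, it holds because motivic Chern classes are defined via the functorial projective bundle formula.
\end{itemize}
Restriction also preserves $F^1_{\mot}\K_0$. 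We therefore obtain a commutative square comparing $p_1-c_1$ on $F^1_{\mot}\K_0(X)$ and on $F^1_{\mot}\K_0(U)$, via $j^*\colon \H^{2,1}_{\mot}(X,\Z)\to\H^{2,1}_{\mot}(U,\Z)$.

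On $U$ the statement is known. By the smooth case (property (4)), the motivic filtration coincides with the slice filtration, which agrees with the $\gamma$-/topological filtration in low degrees. Under $\H^{2,1}_{\mot}(U,\Z)\simeq \mathrm{CH}^1(U)=\Pic(U)$, the map $p_1$ sends a class $x$ of rank $0$ to $\det(x)$, and this equals $c_1(x)$. I would justify this by the splitting principle, reducing to $x=[L]-1$ for a line bundle $L$. Alternatively one can use the known identification of the edge maps of the motivic spectral sequence with Chern classes in weight $1$, up to the sign conventions fixed by the paper. Hence $(p_1-c_1)(x)$ restricts to $0$ on $U$.

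It remains to show that $j^*\colon\H^{2,1}_{\mot}(X,\Z)\to\H^{2,1}_{\mot}(U,\Z)$ is injective. By property (2) this map identifies with $\Pic(X)\to\Pic(U)$. For a normal integral noetherian scheme, $\Pic$ embeds into the class group of Weil divisors, since a line bundle is trivial iff its associated Cartier divisor is principal. Removing a closed subset of codimension $\geq 2$ does not change $\mathrm{Cl}$, so $\Pic(X)\hookrightarrow \mathrm{Cl}(X)=\mathrm{Cl}(U)=\Pic(U)$ is injective. Concretely, if $L|_U\simeq\mathcal O_U$, then a trivializing section extends to a section of $L$ over $X$ by normality (the $S_2$ property). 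Its inverse extends as well, so $L\simeq\mathcal O_X$. This gives $p_1=c_1$ on $F^1_{\mot}\K_0(X)$.

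The main obstacle is the smooth case: pinning down that $p_1$ is exactly $c_1$, with the correct sign, rather than $-c_1$. I would handle it by splitting-principle reduction to $[L]-1$ and a universal computation on $\mathbb P^\infty$ or $\mathbb P^n$. There $F^1/F^2$ is generated by $[\mathcal O(1)]-1$, and one checks directly that $p_1([\mathcal O(1)]-1)$ equals the hyperplane class. This uses the fact that the Elmanto–Morrow projective bundle formula is built from this same class.
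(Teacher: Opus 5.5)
Your proposal is correct and takes essentially the same route as the paper. You compare $p_1-c_1$ on $X$ and on the smooth locus $U$ through the restriction square, invoke the smooth case (where the motivic filtration is Voevodsky's slice filtration), and conclude from the injectivity of $\Pic(X)\to\Pic(U)$ for normal $X$, while also spelling out details the paper leaves implicit: the $S_2$/class-group argument for that injectivity, and a splitting-principle reduction to $[L]-1$ to fix the sign in the smooth case.
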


\begin{proof}
Let $U\subset X$ be the smooth locus. Since $X$ is normal, the restriction map $\mathrm{Pic}(X)\to \mathrm{Pic}(U)$ is injective and we can conclude using the commutative diagram 
\[
\xymatrix{F^1_{\mot}\K_0(X)\ar[r]^-{p_1-c_1}\ar[d] & \H_{\mot}^{2,1}(X,\Z)\simeq\mathrm{Pic}(X)\ar[d] \\
F^1_{\mot}\K_0(U)\ar[r]^-{p_1-c_1}\ar[r] & \H_{\mot}^{2,1}(U,\Z)\simeq\mathrm{Pic}(U)}
\] 
and the fact that the claim is true when $X$ is smooth, as the motivic filtration coincides in that case with Voevodsky's slice filtration.
\end{proof}

\begin{lm}\label{lm:secondchernclass}
If $X$ is normal, the second Chern class
\[
c_2\colon F^2_{\mot}\K_0(X)\to \mathrm{H}_{\mot}^{4,2}(X,\Z)
\]
is a group homomorphism.
\end{lm}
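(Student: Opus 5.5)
The plan is to extend $c_2$ to all of $\K_0(X)$ through the total Chern class and then show that the quadratic cross-terms vanish on $F^2_{\mot}\K_0(X)$. By the projective bundle formula and the splitting principle, the total Chern class $c=1+c_1+c_2+\cdots$ extends to a homomorphism from $\K_0(X)$ to the multiplicative group $1+\prod_{i\geq 1}\H^{2i,i}_{\mot}(X,\Z)$, with $c(x+y)=c(x)c(y)$. It is well defined on $\K_0$ because it is multiplicative on short exact sequences of vector bundles, and $\K_0$ of a scheme is generated by classes of bundles modulo such sequences. (If $X$ lacks an ample family and $\K_0$ is not generated by bundles, we only need the statement on the subgroup generated by bundles, which is all of $\K_0$ for affine $X$.) Taking the degree $2$ part gives, for all $x,y\in\K_0(X)$,
\[
c_2(x+y)=c_2(x)+c_1(x)c_1(y)+c_2(y).
\]

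It then suffices to show that $c_1(x)=0$ for $x\in F^2_{\mot}\K_0(X)$. By Lemma \ref{lm:determinant}, on $F^1_{\mot}\K_0(X)$ the class $c_1$ agrees with $p_1$. From the fiber sequence $F^2_{\mot}\K\to F^1_{\mot}\K\to \Z(1)_{\mot}[2]$, the composite $\pi_0F^2_{\mot}\K(X)\to\pi_0F^1_{\mot}\K(X)\xrightarrow{p_1}\H^{2,1}_{\mot}(X,\Z)$ is zero. By Lemma \ref{lem:filtration} (this is where normality enters), $\pi_0F^i_{\mot}\K(X)=F^i_{\mot}\K_0(X)$ for $i=1,2$, so $p_1$ vanishes on the image of $F^2_{\mot}\K_0(X)$ in $F^1_{\mot}\K_0(X)$. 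Hence $c_1|_{F^2_{\mot}\K_0(X)}=0$, the cross-term $c_1(x)c_1(y)$ vanishes, and $c_2(x+y)=c_2(x)+c_2(y)$ on $F^2_{\mot}\K_0(X)$.

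The main point to check is the first step: that the motivic Chern classes of Elmanto--Morrow satisfy the Whitney sum formula and descend to $\K_0$, not merely to $\Vect_n(X)$. I would derive this from the splitting principle stated earlier: pull back along $q\colon Y\to X$, which is injective on motivic cohomology, to reduce to bundles filtered by line bundles. For such bundles the total Chern class is $\prod(1+c_1(L_i))$ by the projective bundle construction, which gives multiplicativity in extensions. Compatibility of $c_1$ with $p_1$ is already provided by Lemma \ref{lm:determinant}, so no further input is needed.
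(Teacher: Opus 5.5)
Your proposal is correct and follows essentially the same route as the paper: the Whitney sum formula for virtual classes gives $c_2(x+y)=c_2(x)+c_1(x)c_1(y)+c_2(y)$, and the cross-term dies because $c_1$ vanishes on $F^2_{\mot}\K_0(X)$ (via $c_1=p_1$ on $F^1_{\mot}\K_0(X)$, which is where normality enters). The paper does the same computation concretely, writing $\alpha=[V_1]-[W_1]$ with $c_1(V_1)=c_1(W_1)$ and expanding $c_2$ of direct sums; like you, it implicitly uses that $\K_0(X)$ is generated by classes of vector bundles.
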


\begin{proof}
If $\alpha\in F^2_{\mot}\K_0(X)\subset \K_0(X)$, we may write $\alpha=[V_1]-[W_1]$ for some vector bundles $V_1,W_1$ over $X$. The composites
\[
F^2_{\mot}\K_0(X)\to F^0_{\mot}\K_0(X)\to \H_{\mot}^{0,0}(X,\Z)
\]
and 
\[
F^2_{\mot}\K_0(X)\to F^1_{\mot}\K_0(X)\to \H_{\mot}^{2,1}(X,\Z)
\]
are trivial by definition, and we deduce from the fact that the first map factors through the rank map that $V_1$ and $W_1$ are of the same rank. We also deduce from the second one that they have the same determinant (and the same first Chern class). Write now 
\[
c_t(V_1)=1+c_1(V_1)t+c_2(V_1)t^2+\ldots
\]
for the Chern power series of $V_1$ (which is in fact a polynomial as $X$ is finite dimensional), and the same for $W_1$. Since the Whitney sum formula holds, $c_t(-W_1)=c_t(W_1)^{-1}$ as a power series and we then have 
\[
c_t(-W_1)=1-c_1(W_1)t+(c_1(W_1)^2-c_2(W_1))t^2+\ldots=1-c_1(V_1)t+(c_1(V_1)^2-c_2(W_1))t^2+\ldots
\]
yielding in turn
\[
c_t(\alpha)=c_t(V_1)c_t(-W_1)=1+(c_2(V_1)-c_2(W_1))t^2+\ldots
\]
and so $c_2(\alpha)=c_2(V_1)-c_2(W_1)$. If $\beta=[V_2]-[W_2]$, the same formula holds and then 
\[
c_2(\alpha+\beta)=c_2([V_1\oplus V_2]-[W_1\oplus W_2])=c_2([V_1\oplus V_2])-c_2([W_1\oplus W_2]).
\]
As $c_2([V_1\oplus V_2])=c_2(V_1)+c_1(V_1)c_1(V_2)+c_2(V_2)$ and $c_2([W_1\oplus W_2])=c_2(W_1)+c_1(W_1)c_1(W_2)+c_2(W_2)$, we conclude using $c_1(V_1)=c_1(W_1)$ and $c_1(V_2)=c_1(W_2)$.
\end{proof}

\begin{rmk}
Note that the Chern class $c_2\colon F^1_{\mot}\K_0(X)\to \mathrm{H}_{\mot}^{4,2}(X,\Z)$ is in general not a group homomorphism.
\end{rmk}

\begin{prop}\label{prop:secondchernclass}
For any normal scheme $X$, the homomorphism 
\[
p_2\colon F^2_{\mathrm{mot}}\K_0(X)\to \mathrm{H}_{\mathrm{mot}}^{4,2}(X,\Z)
\]
is equal to $-c_2$.
\end{prop}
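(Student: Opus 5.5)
Both $p_2$ and $-c_2$ are group homomorphisms on $F^2_{\mot}\K_0(X)$: the first by construction, the second by Lemma~\ref{lm:secondchernclass}. The plan mirrors the proof of Lemma~\ref{lm:determinant}, namely reduce to the smooth locus and then use the smooth case. The new difficulty is that $\H^{4,2}_{\mot}(X,\Z)\to \H^{4,2}_{\mot}(U,\Z)$ need not be injective, so a direct reduction is not enough.

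First, treat the smooth case. There the motivic filtration agrees with Voevodsky's slice filtration, i.e.\ with the gamma/topological filtration, up to the usual identification. The statement $p_2=-c_2$ on $F^2$ is then the classical fact that on $F^2_\gamma$ the second graded map is $(-1)^{i-1}(i-1)!\,c_i$ with $i=2$. It can be checked via the splitting principle and the Riemann--Roch-type compatibility of the motivic spectral sequence with Chern classes.

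Second, reduce the general case to the case of universal elements. Write $\alpha=[V]-[W]\in F^2_{\mot}\K_0(X)$; by the argument in Lemma~\ref{lm:secondchernclass}, $V$ and $W$ have the same rank $n$ and the same determinant. Adding a common bundle, we may assume $W\cong \mathcal O^{n-1}\oplus\det V$ after stabilizing, or at least that $W$ is globally generated. Since $X$ is affine, $V$ and $W$ are pulled back from a smooth scheme $G$ (a product of Grassmannians or a Jouanolou device over one) along a map $f\colon X\to G$, with universal bundles $\mathcal V,\mathcal W$. Pulling back, $\alpha=f^*([\mathcal V]-[\mathcal W])$. To land in $F^2$ on $G$, we replace $G$ by the smooth closed locus $G'\subset G$ where $\det\mathcal V\cong\det\mathcal W$ via a chosen isomorphism, for instance a $\mathbb G_m$-torsor over $G$, which is still smooth. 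Then $\tilde\alpha=[\mathcal V]-[\mathcal W]$ has rank $0$ and trivial determinant on $G'$. For a connected smooth $G'$, Lemmas~\ref{lm:rank} and \ref{lm:determinant} show $\tilde\alpha\in F^2_{\mot}\K_0(G')$, and the lift lies in $\pi_0F^2_{\mot}\K(G')$ because Lemma~\ref{lem:filtration} gives injectivity on both sides.

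Third, use naturality. Both $p_2$, a map of filtered spectra via $\pi_0$, and $c_2$ commute with $f^*$. Lemma~\ref{lem:filtration} makes the lift to $\pi_0F^2$ unique, so $f^*$ carries the lift of $\tilde\alpha$ to the lift of $\alpha$. Hence
\[p_2(\alpha)=f^*p_2(\tilde\alpha)=-f^*c_2(\tilde\alpha)=-c_2(\alpha).\]

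The main obstacle is making the second step rigorous: producing a smooth classifying scheme $G'$ through which the pair $(V,W)$, together with a trivialization of $\det V\otimes\det W^{-1}$, factors. Normality is used only through Lemma~\ref{lem:filtration}, which is why the hypothesis appears. I would first try the Jouanolou trick over $\mathrm{Gr}\times\mathrm{Gr}$, using that affine $X$ maps to such a device. Then I would take the $\mathbb G_m$-torsor of isomorphisms of the two determinant line bundles, and check that it is smooth and connected.
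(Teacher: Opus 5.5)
\textbf{The gap: your argument needs $X$ affine, but the proposition is for any normal scheme.}

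Your second step begins ``Since $X$ is affine'', but that is not a hypothesis. The classifying map $f\colon X\to\mathrm{Gr}\times\mathrm{Gr}$ requires $V$ and $W$ to be globally generated, which you get only on affine $X$. For quasi-projective $X$ it can be repaired. One way is to rewrite $\alpha$ as a difference of globally generated bundles. Another is to classify $V(m)$, $W(m)$ together with an immersion $X\hookrightarrow\mathbb{P}^M$, using the universal classes $\mathcal V\boxtimes\mathcal O(-m)$ and $\mathcal W\boxtimes\mathcal O(-m)$. For a general normal scheme, however, there is no such map and no evident reduction to the affine case. A Jouanolou device $\tilde X\to X$ does not obviously help, because Elmanto--Morrow cohomology is not $\mathbb{A}^1$-invariant on singular schemes, so you have no control of $\H^{4,2}_{\mot}(X,\Z)\to\H^{4,2}_{\mot}(\tilde X,\Z)$.

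The step you flag as the main obstacle is not actually one. For affine $X$, the $\mathbb{G}_m$-torsor $\mathrm{Isom}(\det\mathcal V,\det\mathcal W)$ over $\mathrm{Gr}\times\mathrm{Gr}$ (a torsor, not a closed locus) is smooth, and $f$ lifts to it once you choose $\det V\cong\det W$. From there your naturality argument is correct: $p_2$ and $c_2$ are functorial, and Lemma~\ref{lem:filtration} gives uniqueness of lifts to $\pi_0F^2_{\mot}\K$.

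\textbf{The smooth case is only asserted, and it carries all the content.} You would need two inputs:
\begin{itemize}
\item the identification $\gr^2\simeq\Z(2)_{\mot}[4]$ sends $[\mathcal O_Z]$ to $[Z]$, in the same normalization that makes $p_1=c_1$;
\item Riemann--Roch without denominators, $c_2([\mathcal O_Z])=-[Z]$.
\end{itemize}
(Also, on $\K_0$ the slice filtration is the topological filtration rather than the $\gamma$-filtration; these agree in degree $2$, so this slip is harmless here.)

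\textbf{How the paper avoids all of this.} It works directly on any normal $X$:
\begin{itemize}
\item By Lemma~\ref{lm:secondchernclass}, both sides are additive, so it suffices to treat classes $[V]-[\det V]-(n-1)$.
\item In rank $2$, the splitting principle lets you take $V=L_1\oplus L_2$. The multiplicativity square \eqref{eqn:pmultiplicative} applied to $([L_1]-1)([L_2]-1)=[\det V]+1-[V]$ gives $p_2([\det V]+1-[V])=c_1(L_1)c_1(L_2)=c_2(V)$, using Lemma~\ref{lm:determinant}.
\item Induction on rank via $W=V\oplus L$ finishes.
\end{itemize}
The only smooth input is thus $p_1=c_1$, the sign comes out automatically, and no global generation is needed. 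Your route becomes a valid alternative, for affine or quasi-projective $X$, only once the cited smooth-case compatibility is actually supplied.
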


\begin{proof}
In this proof, we write $1$ for the class $[\mathcal{O}_X]\in K_0(X)$. Let $\alpha=[V]-[W]\in F^2_{\mathrm{mot}}\K_0(X)$. As observed above, the bundles have the same rank and determinant. If $n=\mathrm{rank}(V)\geq 1$, we write 
\[
[V]-[W]=[V]-[\det(V)]-(n-1)+[\det(V)]+(n-1)-[W]
\]
and we are reduced (in view of the above lemma) to prove the result for elements of the form $[V]-[\det(V)]-(n-1)\in F^2_{\mathrm{mot}}K_0(X)$. If $V$ is of rank one, there is nothing to prove and we first suppose that $V$ is of rank $2$. Using the splitting principle, we may suppose that $V=L_1\oplus L_2$ for line bundles $L_1$ and $L_2$ over $X$. We then have
\[
([L_1]-1)\cdot ([L_2]-1)=[L_1\otimes L_2]-[L_1]-[L_2]+1=[\det V]+1-[V].
\]
Using the commutative diagram \eqref{eqn:pmultiplicative}, we obtain
\[
p_2([\det V]+1-[V])=p_1([L_1]-1)\cdot p_1([L_2]-1)=c_1(L_1)c_1(L_2).
\]
Now $c_1(L_1)c_1(L_2)=c_2(V)=c_2([V]-[\det V]-[\mathcal{O}_X])$ by the above lemma and the result is proved in that case. 

Suppose that we have proved that 
\[
p_2([V]-[\det(V)]-(n-1))=-c_2([V]-[\det(V)]-(n-1))
\]
for any vector bundle of rank $n\geq 2$ and consider a vector bundle $W$ of rank $n+1\geq 3$. Using the splitting principle, we may suppose that $W=V\oplus L$ for some vector bundle $V$ of rank $n$. We obtain
\begin{eqnarray*}
[W]-[\det(W)]-n & = & [V]-[\det(V)]-(n-1)-1+[L]+[\det(V)]-[\det(W)] \\
 & = & [V]-[\det(V)]-(n-1)-([L]-1)([\det(V)]-1).
\end{eqnarray*}
Thus finally
\begin{eqnarray*}
p_2([W]-[\det(W)]-n) & = & p_2([V]-[\det(V)]-(n-1))-p_2(([L]-1)([\det(V)]-1)) \\
& = & -c_2(V)-c_1(L)c_1(\det(V)) \\
& = & -c_2(V)-c_1(L)c_1(V) \\
& = & -c_2(W).
\end{eqnarray*}
\end{proof}

As a result, we can now obtain classification of vector bundles over surfaces, stated in the introduction.

\begin{proof}[Proof of Theorem \ref{thm:surfaces}]
In view of \cite{Suslin77}, two vector bundles of rank $2$ over $X$ are isomorphic if and only if they have the same class in $\K_0(X)$. Let then $V$ and $W$ be rank $2$ bundles having the same first and second Chern classes. Since they have the same rank, the class $[V]-[W]\in \K_0(X)$ is in fact in $F^1_{\mot}\K_0(X)$ by Lemma \ref{lm:rank}. Further, Lemma \ref{lm:determinant} shows that it is indeed in $F^2_{\mot}\K_0(X)$ and we conclude using Lemma \ref{lm:secondchernclass} and Proposition \ref{prop:secondchernclass} 
that it is actually in $F^3_{\mot}\K_0(X)$ which vanishes in view of Lemma \ref{lem:exhaustive}.

We still need to show that the map 
\[
(c_1, c_2) \colon \Vect_2(X) \to \H^{2,1}_\mot (X, \Z) \times \H^{4,2}_\mot (X, \Z)
\]
is also onto. Let thus $(a_1,a_2)\in \H^{2,1}_\mot (X, \Z) \times \H^{4,2}_\mot (X, \Z)$. Since $\H^{2,1}_\mot (X, \Z)$ is isomorphic to $\mathrm{Pic}(X)$, there exists a line bundle $L$ over $X$ such that $c_1(L)=a_1$. We now prove that the map $p_2\colon F^2_{\mot}\K_0(X)\to \H^{4,2}_\mot (X, \Z)$ is onto. The long exact sequence reads as 
\[
\pi_0(F^2_{\mot}\K(X))\to \H^{4,2}_\mot (X, \Z)\to \pi_{-1}(F^3_{\mot}\K(X))
\]
and we conclude from Lemma \ref{lem:exhaustive} that $\pi_{-1}(F^3_{\mot}\K(X))=0$. By Proposition \ref{prop:secondchernclass}, there exists an element $\alpha\in F^2_{\mot}\K_0(X)$ such that $c_2(\alpha)=a_2$. Using Serre's splitting theorem \cite[\S 4, Theorem 1]{Serre58}, $\alpha=[V]-[\mathcal{O}_X^2]$ with $V$ of rank $2$ and with trivial determinant because $\alpha\in F^2_{\mot}\K_0(X)$. We see that $[V]-[\mathcal{O}_X^2]+[L]-[\mathcal{O}_X]\in \K_0(X)$ has the required Chern classes. We now use again Serre splitting theorem to write $[V\oplus L]$ as $[W\oplus\mathcal{O}_X]$ and conclude that $[W]$ satisfies $c_1(W)=a_1$ and $c_2(W)=a_2$.
\end{proof}

\section{Third piece of the filtration}\label{sec:third-piece}

We now pass to the identification of the third piece in the filtration of $\K_0(X)$, in the case where $X$ is a normal threefold. First, we observe that the map
\[
 \pi_0(F_{\mot}^3K(X))\xrightarrow{p_3}\mathrm{H}^{6,3}_{\mot}(X,\Z)
\]
is an isomorphism, since $\pi_0(F_{\mot}^4K(X))=0=\pi_{-1}(F_{\mot}^4K(X))$ by Lemma \ref{lem:exhaustive}. We thus obtain an \emph{edge homomorphism}
\[
\epsilon_3\colon \mathrm{H}^{6,3}_{\mot}(X,\Z)\to F^3_{\mot}\K_0(X)
\]
defined as the composite $\mathrm{H}^{6,3}_{\mot}(X,\Z)\xrightarrow{p_3^{-1}} \pi_0(F_{\mot}^3K(X))\to F^3_{\mot}\K_0(X)\subset \K_0(X)$. Note that $\epsilon_3$ is onto by definition.

Recall Proposition~\ref{prop:torsionfree}, which claimed that for an affine threefold $X$ over an algebraically closed field, the group $\H_{\mot}^{6,3}(X,\Z)$ is torsion free. We will need the following enhanced version of this result for our proof.

\begin{prop}\label{cor:XminusTtf}
Let $X$ be an affine threefold over an algebraically closed field $k$, and let $T\subset X$ be an integral curve in the smooth locus $X_{\mathrm{smooth}}\subset X$. Then the group $\H_{\mot}^{6,3}(X\smallsetminus T,\Z)$ is torsion free.
\end{prop}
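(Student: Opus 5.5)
The plan is to rerun the argument of Proposition~\ref{prop:torsionfree} with a localization sequence for the pair $(X, X\smallsetminus T)$. As in that proof, the $\ell$-Bockstein sequence
\[
\H^{5,3}_{\mot}(X\smallsetminus T,\Z/\ell\Z)\to \H^{6,3}_{\mot}(X\smallsetminus T,\Z)\xrightarrow{\ell}\H^{6,3}_{\mot}(X\smallsetminus T,\Z)
\]
reduces the claim to showing $\H^{5,3}_{\mot}(X\smallsetminus T,\Z/\ell\Z)=0$ for every prime $\ell$. Here I treat $\ell$ prime to the characteristic, or work in characteristic $0$ as the paper does for Proposition~\ref{prop:torsionfree}; the remaining case is deferred to Elmanto--Morrow just as there.

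Since $X\smallsetminus T$ is no longer affine, Artin vanishing cannot be applied to it directly. Instead I would use the fiber sequence with supports
\[
\Z_{\mot}(3)(X/(X\smallsetminus T))/\ell\to \Z_{\mot}(3)(X)/\ell\to \Z_{\mot}(3)(X\smallsetminus T)/\ell,
\]
which gives an exact sequence
\[
\H^{5,3}_{\mot}(X,\Z/\ell\Z)\to \H^{5,3}_{\mot}(X\smallsetminus T,\Z/\ell\Z)\to \H^{6,3}_{\mot,T}(X,\Z/\ell\Z).
\]
The left-hand group vanishes by the proof of Proposition~\ref{prop:torsionfree}, since $X$ is affine. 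It therefore suffices to show that the group with supports on the right vanishes.

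For the right-hand group I would use Nisnevich (indeed Zariski) excision. Because $T\subset X_{\mathrm{smooth}}$, the same bicartesian-square argument used for a smooth closed point before the definition of $\varphi_x$ identifies
\[
\H^{6,3}_{\mot,T}(X,\Z/\ell\Z)\simeq \H^{6,3}_{\mot,T}(X_{\mathrm{smooth}},\Z/\ell\Z).
\]
On the smooth scheme $X_{\mathrm{smooth}}$, motivic cohomology is Bloch's higher Chow groups by the smooth-case property. Bloch's localization theorem then computes cohomology with supports in terms of higher Chow groups of $T$ itself, which need not be smooth:
\[
\H^{i,j}_{\mot,T}(X_{\mathrm{smooth}},\Z/\ell\Z)\simeq \mathrm{CH}^{j-2}(T,2j-i;\Z/\ell\Z),
\]
because $T$ has codimension $2$. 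For $(i,j)=(6,3)$ this is $\mathrm{CH}^1(T,0;\Z/\ell\Z)=\mathrm{CH}_0(T)/\ell$, using the universal coefficient sequence together with $\mathrm{CH}^1(T,-1)=0$.

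It remains to show that $\mathrm{CH}_0(T)$ is $\ell$-divisible. The curve $T$ is closed in the affine scheme $X$, so it is an affine integral curve. Its normalization $\tilde T$ is a smooth affine curve, and pushforward $\mathrm{CH}_0(\tilde T)\to \mathrm{CH}_0(T)$ is surjective because the normalization map is finite and surjective on closed points. Finally, $\mathrm{CH}_0(\tilde T)=\Pic(\tilde T)$ is a quotient of $\mathrm{Pic}^0(\bar T)$, where $\bar T$ is the smooth compactification of $\tilde T$; this holds because $\tilde T$ is $\bar T$ minus a nonempty set of points. Over an algebraically closed field the points of the Jacobian form a divisible group, so $\mathrm{CH}_0(T)/\ell=0$ and we are done.

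I expect the main obstacle to be the identification of the supported group. One must make sure that excision to $X_{\mathrm{smooth}}$ is legitimate for the Elmanto--Morrow complexes, which are Nisnevich sheaves with supports as in the definition above. One must also make sure that the comparison with Bloch's complex is compatible with supports, so that the localization sequence for higher Chow groups can be invoked even though $T$ may be singular. If this direct comparison is awkward, I would fall back on stratifying $T$: remove its finitely many singular points $S$, apply purity to the smooth pair $(X_{\mathrm{smooth}}\smallsetminus S,\,T\smallsetminus S)$, and handle the points of $S$ via the point-purity computation already used in the lemma on $\mathrm{K}_0(\{x\};X)$. For a point, the relevant group is $\H^{0,0}(k(x),\Z/\ell\Z)$ in degree $6-6=0$, which could fail to vanish, so in that case I would track the boundary map carefully.
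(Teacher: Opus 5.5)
Your proposal is correct and is essentially the paper's argument: localization along $T$, Zariski excision into $X_{\mathrm{smooth}}$, identification of the supported group with $\mathrm{CH}_0(T)$ via Bloch's cycle complex, and divisibility of $\mathrm{CH}_0(T)$ as a quotient of $\Pic(\tilde T)$. The difference is that the paper runs this integrally: it shows $\H^{7,3}_{\mot,T}(X,\Z)=0$ and that $\H^{6,3}_{\mot,T}(X,\Z)$ is divisible, so $\H^{6,3}_{\mot}(X\smallsetminus T,\Z)$ is a torsion-free group modulo a divisible subgroup. That route uses only the statement of Proposition~\ref{prop:torsionfree}, whereas your mod-$\ell$ version uses the stronger vanishing of $\H^{5,3}_{\mot}(X,\Z/\ell\Z)$ taken from its proof; you can avoid this (and your restriction on $\ell$) by noting that once $\H^{6,3}_{\mot,T}(X,\Z/\ell\Z)=0$, naturality of the Bockstein makes $\H^{6,3}_{\mot}(X,\Z)[\ell]\to\H^{6,3}_{\mot}(X\smallsetminus T,\Z)[\ell]$ surjective.
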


\begin{proof}
We use the exact sequence
\[
\mathrm{H}^{6,3}_{\mot}(X/(X\smallsetminus T),\Z)\to \mathrm{H}^{6,3}_{\mot}(X,\Z)\to \mathrm{H}^{6,3}_{\mot}(X\smallsetminus T,\Z)\to \mathrm{H}^{7,3}_{\mot}(X/(X\smallsetminus T),\Z)
\]
We first prove that $\mathrm{H}^{7,3}_{\mot}(X/(X\smallsetminus T),\Z)=0$. Since $X/(X\smallsetminus T)\simeq X_{\mathrm{smooth}}/(X_{\mathrm{smooth}}\smallsetminus T)$ by Zariski excision, we obtain 
\[
 \mathrm{H}^{7,3}_{\mot}(X/(X\smallsetminus T),\Z) \simeq \mathrm{H}^{7,3}_{\mot}(X_{\mathrm{smooth}}/(X_{\mathrm{smooth}}\smallsetminus T),\Z)
\] 
and we are consequently reduced to show that $\mathrm{H}^{6,3}_{\mot}(X_{\mathrm{smooth}},\Z)\to \mathrm{H}^{6,3}_{\mot}(X_{\mathrm{smooth}}\smallsetminus T,\Z)$ is onto and that $\mathrm{H}^{7,3}_{\mot}(X_{\mathrm{smooth}},\Z)=0$. This is obvious since for a smooth scheme $Y$ of dimension $3$, $H^{6,3}_{\mot}(Y,\Z)\simeq \mathrm{CH}^3(Y)$ and $H^{7,3}_{\mot}(Y,\Z)=0$.

We now prove that 
\[
\mathrm{H}^{6,3}_{\mot}(X/(X\smallsetminus T),\Z) \simeq \mathrm{H}^{6,3}_{\mot}(X_{\mathrm{smooth}}/(X_{\mathrm{smooth}}\smallsetminus T),\Z)
\]
is a divisible group. Once again, we use the fact that 
\[
\mathrm{H}^{6,3}_{\mot}(X_{\mathrm{smooth}}/(X_{\mathrm{smooth}}\smallsetminus T),\Z) \simeq \mathrm{CH}^3_T(X_{\mathrm{smooth}})
\]
which can be computed using a complex of the form
\[
k(T)^\times\to \bigoplus_{x\in X^{(3)}\cap T}\Z.
\]
If we denote by $\tilde T$ the normalization of $T$, we have that $\tilde T$ is affine (since $T$ is), and further that $\mathrm{CH}^1(\tilde T)\to \mathrm{CH}^3_T(X_{\mathrm{smooth}})$ is onto (since the fiber over any point $t\in T$ under the finite morphism $\tilde T\to T$ is a finite number of points with the same residue field, using that $k$ is algebraically closed). The Picard group of a smooth affine curve over an algebraically closed field is divisible (\cite[Example 1.6.6, p.17]{Fulton}), and thus $\mathrm{H}^{6,3}_{\mot}(X/(X\smallsetminus T),\Z)$ is divisible. Its image in $\mathrm{H}^{6,3}_{\mot}(X,\Z)$ is then also divisible.

The result follows now from the observation that if we have an exact sequence of abelian groups 
\[
0\to D\xrightarrow{i} G\xrightarrow{\pi} A\to 0
\] 
with $D$ divisible and $G$ torsion free, then $A$ is also torsion free. Suppose that $a\in A$ is such that $na=0$ for some $n\in\N\smallsetminus\{0\}$. If $a'\in G$ is such that $\pi(a')=a$, there exists $d\in D$ such that $na'=i(d)=i(n\frac dn)=ni(\frac dn)$. Thus $a'-i(\frac dn)$ is torsion, and then trivial. It follows that $a=\pi(a')=\pi(a'-i(\frac dn))=0$.
\end{proof}

\begin{prop}\label{prop:edgeisomorphism}
If $X$ is a normal affine threefold over an algebraically closed field $k$, the edge map 
\[
\epsilon_3\colon \H_{\mot}^{6,3}(X,\Z)\to F^3_{\mot}\K_0(X) 
\]
is an isomorphism. Further, the map $\pi_0(F^3\K(X))\to F^3_{\mot}\K_0(X)\subset \pi_0(\K(X))=\K_0(X)$ is injective.
\end{prop}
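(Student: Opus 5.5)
Since $p_3\colon\pi_0(F^3_{\mot}\K(X))\to\H^{6,3}_{\mot}(X,\Z)$ is an isomorphism, the two assertions are equivalent, and $\epsilon_3$ is surjective by definition. It remains to show that $\pi_0(F^3_{\mot}\K(X))\to\K_0(X)$ is injective. Its kernel is the image of the boundary map $\partial\colon\pi_1(F^2_{\mot}\K(X)/F^3_{\mot}\K(X))=\H^{3,2}_{\mot}(X,\Z)\to\pi_0(F^3_{\mot}\K(X))$ composed with the relevant comparisons. By Lemma~\ref{lem:filtration}, $\pi_1(F^2_{\mot}\K(X))\to\K_1(X)$ is injective, so the question is whether the composite $\H^{3,2}_{\mot}(X,\Z)\to\pi_0F^3_{\mot}\K(X)\simeq\H^{6,3}_{\mot}(X,\Z)$ (essentially the differential $d_2$ of the motivic spectral sequence, plus longer differentials from lower weights) vanishes. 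The strategy is to show that the kernel of $\epsilon_3$ is torsion, and then apply Proposition~\ref{prop:torsionfree}.

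\textbf{Step 1: the differentials are torsion.} This mimics the classical fact that the Adams operations $\psi^\ell$ act on the graded pieces of the motivic filtration by $\ell^j$. Since $\psi^\ell$ is compatible with the filtration and with the boundary maps, any differential from weight $2$ (or lower) into weight $3$ is killed by $\ell^3-\ell^2$ (respectively $\ell^3-\ell^{j}$), for every $\ell$. Hence the kernel of $\epsilon_3$ is annihilated by a fixed integer, for instance $\gcd_\ell(\ell^3-\ell^2)\cdot\gcd_\ell(\ell^3-\ell)$, and is therefore torsion. Proposition~\ref{prop:torsionfree} then forces it to vanish.

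\textbf{Fallback without Adams operations.} Should the Adams operations on the Elmanto--Morrow filtration for singular $X$ be unavailable, I would instead use a geometric argument, which is where Proposition~\ref{cor:XminusTtf} enters. Given $x\in\ker\epsilon_3$, first write $x$ as a sum of classes supported at smooth closed points. Then show that the lift to $\pi_1$ of the lower filtration can be chosen supported on a curve $T$ in the smooth locus. Restricting to $X\smallsetminus T$ reduces the problem to the smooth case, where the edge map $\mathrm{CH}^3\to\K_0$ is injective up to $(2)!$-torsion (Riemann--Roch without denominators). Torsion-freeness of $\H^{6,3}_{\mot}(X\smallsetminus T,\Z)$ and of $\H^{6,3}_{\mot}(X,\Z)$, together with divisibility of the supported group, then kills $x$.

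\textbf{Main obstacle.} The hard step is proving that $\ker\epsilon_3$ is torsion in the singular setting. This requires either the existence of filtration-compatible Adams operations with the right eigenvalues on $\gr^j$ for non-smooth $X$, or a generation statement showing that $\H^{6,3}_{\mot}(X,\Z)$ is spanned by classes of smooth points, so that the comparison reduces to the smooth locus. I would try the Adams operation route first.
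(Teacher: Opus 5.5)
Your main route (show $\ker\epsilon_3$ is torsion because the motivic spectral sequence degenerates rationally, then kill the kernel with Proposition~\ref{prop:torsionfree}) is the paper's argument. The paper phrases it as $\epsilon_3\otimes\mathbb{Q}$ being an isomorphism, cites rational degeneration without proof, and never mentions Adams operations; your Adams-weight argument is one way to justify that same input, which both proofs take as given in the singular setting.

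Two small corrections. The injectivity you need from Lemma~\ref{lem:filtration} is that of $\pi_0(F^2_{\mot}\K(X))\to\K_0(X)$, not $\pi_1$; it makes the kernel exactly the image of $\H^{3,2}_{\mot}(X,\Z)$, so no longer differentials enter. The fallback should be dropped, because generation of $F^3_{\mot}\K_0(X)$ by smooth points is deduced in the paper from this very proposition, so using it here would be circular.
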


\begin{proof}
The edge map is onto by definition, and we are thus left to show that it's also injective. We have a commutative diagram
\[
\xymatrix{\H_{\mot}^{6,3}(X,\Z)\ar[r]^-{\epsilon_3}\ar[d] & F^3_{\mot}\K_0(X)\ar[d] \\
\H_{\mot}^{6,3}(X,\Z)\otimes_{\Z}\mathbb{Q}\ar[r]_-{\epsilon_3\otimes 1} & F^3_{\mot}\K_0(X)\otimes_{\Z}\mathbb{Q}}
\]
in which the vertical maps are localizations. Since $\H_{\mot}^{6,3}(X,\Z)$ is torsion free by Proposition \ref{prop:torsionfree}, the left vertical map is injective and the fact that the Atiyah-Hirzebruch spectral sequence degenerates rationally yields an isomorphism 
\[
\epsilon_3\otimes 1\colon \H_{\mot}^{6,3}(X,\Z)\otimes_{\Z}\mathbb{Q}\to F^3_{\mot}\K_0(X)\otimes_{\Z}\mathbb{Q}.
\]
We deduce that $\epsilon_3$ is indeed injective.  The second statement follows from the observation that the map $\pi_0(F^3\K(X))\to F^3_{\mot}\K_0(X)\subset \pi_0(\K(X))=\K_0(X)$ is the composite of $p_3$ with $\epsilon_3$.

\end{proof}

\begin{rmk}\label{rmk:edgeisomorphism}
Under the hypotheses of Proposition \ref{cor:XminusTtf}, we see that the edge map
\[
\epsilon_3\colon \H_{\mot}^{6,3}(X\smallsetminus T,\Z)\to F^3_{\mot}\K_0(X\smallsetminus T) 
\]
is an isomorphism as well, and that $\pi_0(F^3\K(X\smallsetminus T))\to F^3_{\mot}\K_0(X\smallsetminus T)\subset \pi_0(\K(X\smallsetminus T))=\K_0(X\smallsetminus T)$ is injective.
\end{rmk}

\begin{lm}
Let $X$ be a normal scheme. Then the third Chern class
\[
c_3\colon F^3_{\mathrm{mot}}\K_0(X)\to \mathrm{H}_{\mot}^{6,3}(X,\Z)
\]
is a group homomorphism.
\end{lm}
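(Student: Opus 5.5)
We argue exactly as in Lemma~\ref{lm:secondchernclass}, using the total Chern class and the Whitney sum formula. The goal is to show that on $F^3_{\mot}\K_0(X)$ the class $c_3$ is given by an expression that is additive once the lower invariants vanish.

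First, take $\alpha\in F^3_{\mot}\K_0(X)$ and write $\alpha=[V_1]-[W_1]$ with $V_1,W_1$ vector bundles. Since $\alpha\in F^3_{\mot}\K_0(X)\subset F^2_{\mot}\K_0(X)$, the argument of Lemma~\ref{lm:secondchernclass} applies (via Lemmas~\ref{lm:rank} and \ref{lm:determinant}). It shows that $V_1$ and $W_1$ have the same rank and the same first Chern class. Moreover $p_2(\alpha)=0$, because $\alpha$ lies in the image of $\pi_0(F^3_{\mot}\K(X))$ and the composite $F^3\to F^2\to \Z(2)[4]$ is zero. Proposition~\ref{prop:secondchernclass} then gives $c_2(\alpha)=-p_2(\alpha)=0$. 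By the computation in Lemma~\ref{lm:secondchernclass}, $c_2(\alpha)=c_2(V_1)-c_2(W_1)$, so $V_1$ and $W_1$ also have equal second Chern classes.

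Next, expand the total Chern class
\[
c_t(\alpha)=c_t(V_1)c_t(W_1)^{-1}.
\]
Write $a_i=c_i(V_1)$ and $b_i=c_i(W_1)$, with $a_1=b_1$ and $a_2=b_2$. The inverse series is
\[
c_t(W_1)^{-1}=1-b_1t+(b_1^2-b_2)t^2+(-b_1^3+2b_1b_2-b_3)t^3+\cdots .
\]
Multiplying out and using $a_1=b_1$, $a_2=b_2$, all lower terms cancel and the coefficient of $t^3$ reduces to $a_3-b_3$. Hence $c_t(\alpha)=1+(c_3(V_1)-c_3(W_1))t^3+\cdots$, and so $c_3(\alpha)=c_3(V_1)-c_3(W_1)$.

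Finally, let $\beta=[V_2]-[W_2]$ be another element of $F^3_{\mot}\K_0(X)$, with the same properties. Then
\[
c_3(\alpha+\beta)=c_3(V_1\oplus V_2)-c_3(W_1\oplus W_2).
\]
By the Whitney formula,
\[
c_3(V_1\oplus V_2)=c_3(V_1)+c_2(V_1)c_1(V_2)+c_1(V_1)c_2(V_2)+c_3(V_2),
\]
and similarly for $W_1\oplus W_2$. The mixed terms cancel because the $c_1$'s and $c_2$'s of $V_i$ and $W_i$ agree. This leaves
\[
c_3(\alpha+\beta)=c_3(\alpha)+c_3(\beta).
\]

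The only non-formal step is the first one: deducing $c_2(V_1)=c_2(W_1)$. This requires knowing that the composite $\pi_0F^3_{\mot}\K(X)\to\pi_0F^2_{\mot}\K(X)\xrightarrow{p_2}\H^{4,2}_{\mot}(X,\Z)$ vanishes, which follows from the fiber sequence~\eqref{eqn:filteredtriangle}. It also requires that $\alpha$, as an element of $F^2_{\mot}\K_0(X)$, has the lift used to define $p_2$. That is guaranteed by Lemma~\ref{lem:filtration}, which identifies $\pi_0 F^2_{\mot}\K(X)$ with $F^2_{\mot}\K_0(X)$ for normal $X$, together with Proposition~\ref{prop:secondchernclass}.
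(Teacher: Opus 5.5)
Your proposal is correct and follows the paper's own argument. The paper's proof says only that the lemma is formally the same as Lemma~\ref{lm:secondchernclass}, using that $\alpha\in F^3_{\mot}\K_0(X)$ forces $c_1(\alpha)=c_2(\alpha)=0$ by Lemma~\ref{lm:determinant} and Proposition~\ref{prop:secondchernclass}; your $t^3$-coefficient computation and the cancellation of the mixed Whitney terms supply exactly the details the paper leaves implicit.
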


\begin{proof}
The proof is formally the same as in Lemma \ref{lm:secondchernclass} above, using the crucial fact that $\alpha\in F^3_{\mathrm{mot}}\K_0(X)$ implies that $c_1(\alpha)=c_2(\alpha)=0$ from Lemma \ref{lm:determinant} and Proposition \ref{prop:secondchernclass}.
\end{proof}


Our goal is now to prove that $F^3_{\mathrm{mot}}\K_0(X)$ is supported on smooth closed points in our situation. We will make use of the homomorphisms $\varphi_x$ defined in \eqref{eqn:varphix}.

\begin{defn}
Let $X$ be a qcqs scheme over a perfect field $k$ of (pure) dimension $d$. We denote by $F^d_{\mathrm{sm}}\K_0(X)\subset F^d_{\mot}\K_0(X)$ the subgroup generated by smooth closed points, i.e. the image of 
\[
\bigoplus_{x\in X_{\mathrm{closed,\phantom{i} smooth}}} \Z\xrightarrow{\oplus\varphi_x} F^d_{\mot}\K_0(X)\subset \K_0(X).
\]
\end{defn}

\begin{rmk}
The group $F^d_{\mathrm{sm}}\K_0(X)$ is usually simply denoted by $F^d\K_0(X)$. In this paper, we use the former notation to avoid confusion with the group $F^d_{\mot}\K_0(X)$.
\end{rmk}

\begin{thm}\label{thm:third-piece-smooth-points}
If $X$ is a normal affine threefold over an algebraically closed field $k$, 
\[
F^3_{\mathrm{sm}}\K_0(X)=F^3_{\mot}\K_0(X).
\]
\end{thm}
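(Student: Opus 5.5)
The inclusion $F^3_{\mathrm{sm}}\K_0(X)\subset F^3_{\mot}\K_0(X)$ is the lemma proved just after \eqref{eqn:varphix}, so only the reverse inclusion needs work. By Proposition~\ref{prop:edgeisomorphism}, $\epsilon_3$ identifies $\H^{6,3}_{\mot}(X,\Z)$ with $F^3_{\mot}\K_0(X)$. It therefore suffices to show that $\H^{6,3}_{\mot}(X,\Z)$ is generated by the classes $p_3(\varphi_x(1))$ for $x$ a smooth closed point. Equivalently, writing $Q$ for the cokernel of $\bigoplus_{x}\Z\to \H^{6,3}_{\mot}(X,\Z)$, we must prove $Q=0$.

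\textbf{Step 1: reduce to a torsion question.} Since $X$ is normal, its singular locus $S$ is a finite set of closed points. Let $U=X\smallsetminus S$ be the smooth locus. The localization sequence gives
\[
\H^{6,3}_{\mot,S}(X,\Z)\to \H^{6,3}_{\mot}(X,\Z)\to \H^{6,3}_{\mot}(U,\Z)\simeq \mathrm{CH}^3(U)\to \H^{7,3}_{\mot,S}(X,\Z).
\]
$\mathrm{CH}^3(U)$ is generated by smooth closed points, and these lift to $\varphi_x$-classes in $X$. Hence $Q$ is a quotient of the image of $\H^{6,3}_{\mot,S}(X,\Z)$. I would argue that this image is ``small'': it should be killed rationally, using the rational degeneration of the motivic spectral sequence together with the fact that $K_0$ with supports in finitely many points of a threefold rationally contributes nothing in codimension~$3$ beyond cycles. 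In that case $Q$ is torsion, and since $\H^{6,3}_{\mot}(X,\Z)$ is torsion free by Proposition~\ref{prop:torsionfree}, it remains to show that $Q$ has no torsion.

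\textbf{Step 2: kill torsion in $Q$ by moving along curves.} Take $\alpha\in \H^{6,3}_{\mot}(X,\Z)$ with $n\alpha=\sum n_i[x_i]$. The idea is to choose an integral curve $T$ in $X_{\mathrm{smooth}}$ passing through all the $x_i$. Such a curve exists by a Bertini-type argument: $X$ is affine, $\dim X=3$, and the singular points can be avoided. We then restrict to $X\smallsetminus T$, where $n\alpha|_{X\smallsetminus T}=0$. By Proposition~\ref{cor:XminusTtf}, $\H^{6,3}_{\mot}(X\smallsetminus T,\Z)$ is torsion free, so $\alpha|_{X\smallsetminus T}=0$. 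The localization sequence then shows that $\alpha$ comes from $\H^{6,3}_{\mot,T}(X,\Z)\simeq \mathrm{CH}^3_T(X_{\mathrm{smooth}})$, which is generated by smooth closed points of $T$. Hence $\alpha$ lies in the span of the classes $\varphi_x$, and $Q$ is torsion free. The compatibility of $\varphi_x$ with the cycle classes in $\mathrm{CH}^3_T$ follows from the purity identifications used in the proof of the lemma after \eqref{eqn:varphix}. Remark~\ref{rmk:edgeisomorphism} allows us to run the same argument at the level of $F^3_{\mot}\K_0$ if that is more convenient.

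\textbf{Main obstacle.} The hard step is Step~1: showing that the classes supported at singular points do not produce a non-torsion part of $Q$. Put differently, we need that $\H^{6,3}_{\mot}(X,\Z)$ is generated, up to torsion, by smooth points. My first attempt would be to use the rational identification $F^3_{\mot}\K_0(X)_\mathbb{Q}\simeq \gr^3$ of the $\gamma$-filtration. Then I would write any element of $F^3$ as a product of three elements of $F^1$, and use Serre-type general position on the affine threefold to represent such products by cycles avoiding $S$. If that fails, I would instead try a direct moving argument: replace each class supported at $s\in S$ by a curve through $s$ and smooth points, and then apply Step~2.
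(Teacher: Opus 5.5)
Your reduction to showing that $\H^{6,3}_{\mot}(X,\Z)$ is spanned by the point classes is fine, and Step~2 is correct. It is essentially the endgame of the paper's proof, run in motivic cohomology instead of in $F^3_{\mot}\K_0$; the paper uses Proposition~\ref{cor:XminusTtf}, via Remark~\ref{rmk:edgeisomorphism}, for exactly this purpose.

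The gap is Step~1, which carries the whole content of the theorem and is not proved.
\begin{itemize}
\item Its premise is wrong. Normality only says the singular locus has codimension $\geq 2$, so on a threefold it can be a curve, e.g.\ $\{xy=z^2\}\times\mathbb{A}^1$. This does not hurt Step~2, since a general curve through the $x_i$ still avoids such a locus.
\item More seriously, the claim that the image of $\H^{6,3}_{\mot,S}(X,\Z)$ is ``killed rationally'' is unjustified and, read literally, far too strong. Since $\H^{6,3}_{\mot}(X,\Z)$ is torsion free, it says this image is zero, i.e.\ that $\H^{6,3}_{\mot}(X,\Z)\to\mathrm{CH}^3(X_{\mathrm{smooth}})$ is injective. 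For the affine cone over a smooth projectively normal surface, the smooth locus is a $\mathbb{G}_m$-torsor over the surface and has $\mathrm{CH}^3=0$. You would therefore be asserting $F^3_{\mot}\K_0(X)=0$ for every such cone.
\item There is no purity at singular points, so nothing forces $\K$-theory supported there to ``contribute only cycles''. Controlling these classes is precisely what has to be done.
\item The $\gamma$-filtration fallback is likewise only a hope. Comparing $F^3_{\mot}\K_0(X)_{\mathbb Q}$ with $\gr^3_\gamma$ on a singular $X$, and generating the latter by triple products of $F^1$-classes, would both need proofs.
\end{itemize}

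The missing idea is a moving argument at the level of vector bundles. It puts every class of $F^3_{\mot}\K_0(X)$ on a curve in the smooth locus integrally, so no rational step is needed. The paper's argument runs as follows.
\begin{enumerate}
\item Write $\alpha=[P]-[\mathcal O_X^3]$ with $P$ of rank $3$ and $c_1(P)=c_2(P)=0$.
\item Swan's Bertini theorem gives $s\colon P\to\mathcal O_X$ vanishing exactly at finitely many smooth closed points $x_1,\dots,x_n$.
\item On $U=X\smallsetminus\{x_1,\dots,x_n\}$ the kernel $Q$ is locally free of rank $2$, globally generated and of trivial determinant. A second application of Bertini gives $t\colon Q_{\vert U}\to\mathcal O_U$ whose zero locus is a smooth irreducible curve $Y$ missing the singular locus; only codimension $\geq 2$ is used here.
\item Hence $\alpha$ vanishes in $\K_0(X\smallsetminus T)$, where $T=\overline Y\cup\{x_1,\dots,x_n\}\subset X_{\mathrm{smooth}}$.
\item After subtracting point classes for the $x_i\notin\overline Y$, $\alpha$ vanishes in $\K_0(X\smallsetminus\overline Y)$.
\item The edge isomorphisms on $X$ and $X\smallsetminus\overline Y$ then lift $\alpha$ to $\pi_0F^3_{\mot}\K(X/(X\smallsetminus\overline Y))$. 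This is where torsion freeness enters.
\item That group is generated by smooth points, by the same computation you give in Step~2.
\end{enumerate}
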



\begin{proof}
First, we observe that any element in $F^3_{\mot}\K_0(X)$ is of the form $[P]-[\mathcal{O}_X^3]$ with $P$ of rank $3$ such that $c_1(P)=c_2(P)=0$ (use Lemma \ref{lm:determinant} and Proposition \ref{prop:secondchernclass}). By Swan's version of Bertini theorem (\cite[\S 3]{Mohan82}), there exists a section $s\colon P\to \mathcal{O}_X$ whose zero locus is several smooth closed points, say $\{x_1,\ldots,x_n\}\subset X$. The kernel $Q$ of $s$ is finitely generated, and there exists a surjection $\mathcal{O}_X^m\to Q$ for some $m\in\N$. If $U=X\smallsetminus\{x_1,\ldots,x_n\}$, then we have an exact sequence
\[
0\to Q_{\vert U}\to P_{\vert U}\to \mathcal{O}_U\to 0
\]
and in particular $Q_{\vert U}$ is locally free. Moreover, we have a surjective map $\mathcal{O}_U^m\to Q_{\vert U}$ and $Q_{\vert U}$ is still generated by global sections. By Bertini once again, there exists a section $t\colon Q_{\vert U}\to \mathcal{O}_U$ such that the vanishing locus $Y$ of $t$ satisfies the conditions:
\begin{enumerate}
\item $Y$ is of pure codimension $2$.
\item $Y$ is smooth outside the singular locus of $U$.
\item $Y$ is irreducible.
\item $Y$ is disjoint from any closed subset $Z\subset U$ of codimension $\geq 2$.
\end{enumerate}
If $Z_1$ is the singular locus of $X$, then it is also the singular locus of $U$ and is of codimension $\geq 2$ by assumption. We may then assume that $Y\subset U$ is smooth.
If $\overline Y$ is the closure of $Y$ in $X$, we observe that $\overline Y$ might be singular (at points in $(\overline Y\smallsetminus Y)\subset\{x_1,\ldots,x_n\}$) and we set $T:=\overline Y\cup \{x_1,\ldots,x_n\}$; $V=X\smallsetminus T$. On $V$, we have an exact sequence
\[
0\to Q_{\vert V}\to P_{\vert V}\to \mathcal{O}_V\to 0
\]
and a surjection $Q_{\vert V}\to \mathcal{O}_V$. Now, $P$ is of trivial determinant and thus $Q_{\vert V}$ is also of trivial determinant (and of rank $2$). Consequently, we obtain an exact sequence 
\[
0\to \mathcal{O}_V\to Q_{\vert V}\xrightarrow{t_{\vert V}} \mathcal{O}_V\to 0
\]
and we finally see that the class $[P]-[\mathcal{O}_X^3]$ vanishes in $\K_0(V)$. In other words, it is in $\K_{0,T}(X)$, with $T$ in the smooth locus of $X$. We have a cocartesian square 
\[
\xymatrix{X\smallsetminus T\ar[r]\ar[d] & X\smallsetminus \overline Y\ar[d] \\
X\smallsetminus \{y_1,\ldots,y_r\}\ar[r] & X}
\]
where $\{y_1,\ldots,y_r\}$ is the (possibly empty) set of points in $\{x_1,\ldots,x_n\}$ that are not in $\overline Y$,
yielding a commutative diagram of the form
\[
\xymatrix{ 
\pi_0\K(X/(X\smallsetminus \{y_1,\ldots,y_r\}))\ar[r]\ar@{=}[d] & \K_0(X)\ar[r]\ar[d] &  \K_0(X\smallsetminus \{y_1,\ldots,y_r\})\ar[d] \\
\pi_0\K(X/(X\smallsetminus \{y_1,\ldots,y_r\}))\ar[r] & \K_0(X\smallsetminus \overline Y)\ar[r] &  \K_0(X\smallsetminus T).}
\]
By purity (the closed points are in the smooth locus), we have 
\[
\pi_0\K(X/(X\smallsetminus \{y_1,\ldots,y_r\}))=\pi_0F_{\mot}^3\K(X/(X\smallsetminus \{y_1,\ldots,y_r\})) \simeq \bigoplus \Z
\]
and the diagram becomes 
\[
\xymatrix{ 
\bigoplus \Z\ar[r]^-f\ar@{=}[d] & \K_0(X)\ar[r]\ar[d]_-i &  \K_0(X\smallsetminus \{y_1,\ldots,y_r\})\ar[d] \\
\bigoplus \Z\ar[r]_-g & \K_0(X\smallsetminus \overline Y)\ar[r]_-j &  \K_0(X\smallsetminus T)}
\]
Letting $\alpha:=[P]-[\mathcal{O}_X^3]\in F^3_{\mot}\K_0(X)\subset \K_0(X)$, we see that its image under the composite 
\[
\K_0(X)\to \K_0(X\smallsetminus \overline Y)\to  \K_0(X\smallsetminus T)
\]
is trivial, i.e. $ji(\alpha)=0$. There exists thus $a\in\bigoplus \Z$ such that $i(\alpha)=g(a)$, and we obtain that $i(\alpha-f(a))=0$. Note that the image of $a$ is in $F^3_{\mathrm{sm}}\K_0(X)$ by construction. Replacing $\alpha$ by $\alpha-f(a)$, we may then suppose that $i(\alpha)=0$. Note further that $i$ induces a homomorphism $i\colon F^3_{\mot}\K_0(X)\to F^3_{\mot}\K_0(X\smallsetminus \overline{Y})$ and finally we can suppose that 
\[
\alpha\in \ker (F^3_{\mot}\K_0(X)\to F^3_{\mot}\K_0(X\smallsetminus \overline{Y})).
\]
Using Proposition \ref{prop:edgeisomorphism} and Remark \ref{rmk:edgeisomorphism}, we see that $\alpha$ is in fact in the image of the group $\pi_0F_{\mot}^3\K(X/(X\smallsetminus \overline{Y}))$ under the map
\[
\pi_0F^3_{\mot}\K(X/(X\smallsetminus \overline{Y}))\to \pi_0F^3_{\mot}\K(X)\simeq F^3_{\mot}\K_0(X).
\]
Now $\overline Y$ is an integral curve in the smooth locus of $X$, and consequently 
\[
X/(X\smallsetminus \overline{Y})\simeq X_{\mathrm{smooth}}/(X_{\mathrm{smooth}}\smallsetminus \overline{Y}).
\]
We are left to show that the image of
\[
\pi_0F^3_{\mot}K(X/(X\smallsetminus \overline{Y}))=\pi_0F^3_{\mot}K(X_{\mathrm{smooth}}/(X_{\mathrm{smooth}}\smallsetminus \overline{Y}))
\]
is generated by smooth points. Using the fact that $\overline Y$ is of codimension $2$, we obtain 
\[
\pi_0\K(X_{\mathrm{smooth}}/(X_{\mathrm{smooth}}\smallsetminus \overline{Y}))=\pi_0F_{\mot}^2\K(X_{\mathrm{smooth}}/(X_{\mathrm{smooth}}\smallsetminus \overline{Y}))
\]
and an exact sequence
\[
\pi_0F_{\mot}^3K(X_{\mathrm{smooth}}/(X_{\mathrm{smooth}}\smallsetminus \overline{Y}))\to \pi_0F_{\mot}^2K(X_{\mathrm{smooth}}/(X_{\mathrm{smooth}}\smallsetminus \overline{Y}))\to \mathrm{H}^{4,2}_{\mot}(X_{\mathrm{smooth}}/(X_{\mathrm{smooth}}\smallsetminus \overline{Y}))
\]
which reads as 
\[
\pi_0F_{\mot}^3\K(X_{\mathrm{smooth}}/(X_{\mathrm{smooth}}\smallsetminus \overline{Y}))\to \pi_0\K(X_{\mathrm{smooth}}/(X_{\mathrm{smooth}}\smallsetminus \overline{Y}))\to \mathrm{H}^{4,2}_{\mot,\overline Y}(X_{\mathrm{smooth}},\Z) \simeq \Z
\]
The middle group can be identified with the Grothendieck group of perfect complexes on $X_{\mathrm{smooth}}$ which are supported on $\overline Y$, while the right-hand homomorphism can be identified with the localization at the generic point of $\overline Y$. The image of the left-hand group is thus identified with the subgroup of perfect complexes supported on closed points of $\overline Y$, which are smooth points of $X$.
\end{proof}

\begin{rmk}
Matthew Morrow communicated to us that he has a different proof of Theorem \ref{thm:third-piece-smooth-points}, not using the normality assumption and working in any dimension $d$. The main idea is to identify the group $\H_{\mot}^{2d,d}(X,\Z)$ with the cohomology group $\H^d_{\mathrm{Nis}}(X,\mathbf{K}_d^{\mathrm M})$, and then use a result of Kato and Saito showing that the latter is generated by cycle classes of smooth closed points. This argument will appear in his ongoing work with Elden Elmanto.

We believe that the ideas developed in our proof above might be generalized to higher dimensions, under the stronger assumption that the variety is not too singular (e.g., normal).
\end{rmk}

\begin{cor}
Let $X$ be a normal affine threefold over an algebraically closed field $k$. Then, the composite 
\[
F^3_{\mathrm{mot}}\K_0(X)\xrightarrow{c_3} \mathrm{H}_{\mot}^{6,3}(X,\Z)\xrightarrow{\epsilon_3}F^3_{\mathrm{mot}}\K_0(X)
\]
is the multiplication by $2$. Furthermore, the third Chern class map $c_3$ is an isomorphism.
\end{cor}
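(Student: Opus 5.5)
The plan is to reduce the identity $\epsilon_3\circ c_3 = 2$ to a computation on the generators $\varphi_x(1)$ supplied by Theorem~\ref{thm:third-piece-smooth-points}, and then deduce that $c_3$ is an isomorphism from torsion-freeness.

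\textbf{Reduction to generators.} By the preceding lemma, $c_3$ is a group homomorphism on $F^3_{\mot}\K_0(X)$, and $\epsilon_3$ is a homomorphism as well. By Theorem~\ref{thm:third-piece-smooth-points}, $F^3_{\mot}\K_0(X)$ is generated by the classes $\varphi_x(1)$ for $x$ a smooth closed point. It therefore suffices to prove
\[
\epsilon_3(c_3(\varphi_x(1)))=2\varphi_x(1)
\]
for each such $x$. Equivalently, since $\epsilon_3=(\text{incl})\circ p_3^{-1}$ and $p_3$ is an isomorphism on $\pi_0F^3_{\mot}\K(X)$, which injects into $\K_0(X)$ by Proposition~\ref{prop:edgeisomorphism}, it suffices to show that $c_3(\varphi_x(1)) = 2\,p_3(\varphi_x(1))$ in $\H^{6,3}_{\mot}(X,\Z)$.

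\textbf{Local computation.} Both sides come from classes supported at $x$, so the comparison reduces to a statement about the point. The class $\varphi_x(1)$ lifts to $\pi_0F^3_{\mot}\K(X/(X\smallsetminus\{x\}))$, and by excision this group is identified with the corresponding group of $X_{\mathrm{smooth}}/(X_{\mathrm{smooth}}\smallsetminus\{x\})$, which purity identifies with $\Z$. Likewise $\H^{6,3}_{\mot}(X/(X\smallsetminus\{x\}),\Z)\simeq\Z$. The point is to show that both $c_3$ and $2p_3$ factor through these supported groups, compatibly with restriction to the smooth locus. For $p_3$ this is immediate from naturality of the fibre sequences with supports.

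For $c_3$ I would argue as follows. The Chern classes of $\K_0$-classes supported on $\{x\}$ are computed on the smooth scheme $X_{\mathrm{smooth}}$ and then pushed into $X$ via the map $\H^{6,3}_{\mot}(\{x\};X)\to\H^{6,3}_{\mot}(X,\Z)$. Concretely, one can write $\varphi_x(1)$ as $[P]-[\mathcal{O}^3]$ with $P$ obtained from a Koszul-type construction near $x$, or simply use Riemann--Roch without denominators on a smooth neighbourhood. In that smooth setting the classical formula holds: for the class of a closed point $x$ on a smooth threefold,
\[
c_3(\mathcal{O}_x)=(-1)^{3-1}(3-1)!\,[x]=2[x].
\]
Meanwhile $p_3$ sends $\mathcal{O}_x$ to $[x]$, since the slice filtration identifies $\gr^3$ with $\mathrm{CH}^3$ via the cycle class. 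Both Chern classes and $p_3$ are natural, and the square
\[
\H^{6,3}_{\mot,x}(X_{\mathrm{smooth}})\simeq\H^{6,3}_{\mot,x}(X)\to\H^{6,3}_{\mot}(X,\Z)
\]
commutes, so the smooth identity transports to $X$ and gives $c_3(\varphi_x(1))=2p_3(\varphi_x(1))$.

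\textbf{Main obstacle.} I expect the hardest step to be making rigorous that $c_3$ of a class in $F^3_{\mot}\K_0(X)$ supported on $x$ is the image of a supported Chern class computed on $X_{\mathrm{smooth}}$. Chern classes are defined on all of $\K_0(X)$, not with supports, so some care is needed. I would first try the following. Choose a smooth affine open neighbourhood $W$ of $x$ and use the cartesian square of $\K$ together with the analogous Mayer--Vietoris square for $\Z(3)_{\mot}$. The difference $c_3-2p_3$ on $F^3_{\mot}\K_0(X)$ is a homomorphism, natural in $X$, and it vanishes on $W$. A natural homomorphism vanishing on $W$ maps the image of $\pi_0F^3\K(X/(X\smallsetminus x))$ into the image of $\H^{6,3}_{\mot,x}(X)$. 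Since $\H^{6,3}_{\mot}(X\smallsetminus x)$ is torsion-free and detection on the supported group $\Z$ happens through $W$, one concludes that the difference vanishes.

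\textbf{Isomorphism.} Once the identity is established, the composite $\epsilon_3\circ c_3$ is multiplication by $2$. By Proposition~\ref{prop:edgeisomorphism}, $F^3_{\mot}\K_0(X)\simeq\H^{6,3}_{\mot}(X,\Z)$, which is torsion-free, so $c_3$ is injective. For surjectivity, $\H^{6,3}_{\mot}(X,\Z)$ is generated by the classes $[x]=p_3(\varphi_x(1))$, using Theorem~\ref{thm:third-piece-smooth-points}. So it suffices to realize $[x]$ by $c_3$ of some class. For this I would use a rank-$3$ bundle $P$ obtained by the Mohan Kumar--Murthy construction with $c_3(P)=[x]$, which is available locally on the smooth locus since $x$ is a complete intersection there. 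Alternatively, one can show that the group is $2$-divisible, as in the affine smooth case via Roitman-type divisibility of zero-cycles on affine varieties over algebraically closed fields, which combined with the first part gives surjectivity.
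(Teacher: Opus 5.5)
Your overall route is the paper's. You reduce to the generators $\varphi_x(1)$ via Theorem~\ref{thm:third-piece-smooth-points}, pass to $X_{\mathrm{smooth}}$ by excision and apply Grothendieck's formula $c_3(\mathcal{O}_x)=2[x]$. You then get injectivity of $c_3$ from torsion-freeness and surjectivity from $2$-divisibility. For the latter the paper uses that $F^3_{\mathrm{sm}}\K_0(X)$ is uniquely divisible \cite[Corollary 1.5]{Krishna19}; for singular $X$ this Levine--Weibel statement is the input you need, not the smooth Roitman-type result. Your first surjectivity alternative does not work as stated: a rank $3$ bundle $P$ with $c_3(P)=[x]$ only gives an element of $F^3_{\mot}\K_0(X)$ if $c_1(P)=c_2(P)=0$.

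The genuine problem is your ``main obstacle'' paragraph, which does not prove the local identity.

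\emph{Naturality alone does not determine the coefficient.} Naturality of $D=c_3-2p_3$ for $X\smallsetminus\{x\}\subset X$ only shows $D(\varphi_x(1))\in\ker\bigl(\H^{6,3}_{\mot}(X,\Z)\to\H^{6,3}_{\mot}(X\smallsetminus\{x\},\Z)\bigr)=\Z\cdot[x]$, say $D(\varphi_x(1))=m[x]$. Restricting to $W$ cannot determine $m$. For $W$ small enough, $[x]$ already vanishes in $\H^{6,3}_{\mot}(W,\Z)=\mathrm{CH}^3(W)$ (take a function with a simple zero at $x$ on a curve through $x$, and remove its other zeros), while $[x]$ may be nonzero on $X$.

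\emph{The torsion-freeness you invoke is false.} Since $\H^{7,3}_{\mot,x}(X,\Z)=0$, we have $\H^{6,3}_{\mot}(X\smallsetminus\{x\},\Z)\simeq\H^{6,3}_{\mot}(X,\Z)/\Z[x]$. Writing $[x]=2y$ by divisibility gives a nonzero $2$-torsion class $\bar y$ whenever $[x]\neq 0$, because $\H^{6,3}_{\mot}(X,\Z)$ is torsion-free.

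What is actually needed is a Chern class with supports $\K_{0,x}(X)\to\H^{6,3}_{\mot,x}(X,\Z)$, compatible with forgetting supports and with the excision isomorphism to $X_{\mathrm{smooth}}$. This holds, e.g., because the Chern classes are induced by maps of presheaves on $\Sch_k$, so one can pass to fibres, as in Gillet's construction. That is what the paper means by ``the relevant maps respect supports''. Granting it, the computation in $\H^{6,3}_{\mot,x}(X_{\mathrm{smooth}},\Z)\simeq\Z$ is exactly Grothendieck's formula, and the rest of your proof goes through.
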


\begin{proof}
In view of the above theorem, the group $F^3_{\mathrm{mot}}K_0(X)$ is generated by smooth points. To prove the corollary, it suffices then to show that the composite is multiplication by $2$ on these generators. Let $x\in X_{\mathrm{smooth}}$ be a smooth point. The relevant maps respect supports, and we are left to compute the composite
\[
\K_{0,x}(X)\xrightarrow{c_3} \mathrm{H}_{\mot,x}^{6,3}(X,\Z)\xrightarrow{\epsilon_3}\K_{0,x}(X)
\]
which is known to be multiplication by $2$ by Grothendieck's theorem \cite[Section 1]{Mohan82} (we may suppose that $X$ is smooth). 

We now prove that $c_3$ is an isomorphism. To start with, we observe that $F^3_{\mathrm{mot}}\K_0(X)=F^3_{\mathrm{sm}}\K_0(X)$ is uniquely divisible by \cite[Corollary 1.5]{Krishna19}. The multiplication by $2$ homomorphism is thus an isomorphism, and we conclude using Proposition \ref{prop:edgeisomorphism}.
\end{proof}

As a direct consequence of these results, we can finally prove Theorem \ref{thm:threefolds}.
\begin{proof}[Proof of Theorem \ref{thm:threefolds}]
The proof that the map is injective is the same as the one in Theorem \ref{thm:surfaces}, using the above Corollary for the injectivity of $c_3$. 

We now prove that the map is also onto. For this, we consider a triple $(a_1,a_2,a_3)$ with $a_i\in  \mathrm{H}_{\mot}^{2i,i}(X,\Z)$ for any $i=1,2,3$. Let $L$ be a line bundle with $c_1(L)=a_1$ and consider $\alpha_1=[L]-[\mathcal{O}_X]\in F^1_{\mot}\K_0(X)$. Since $c_2\colon  F^2_{\mot}\K_0(X)\to \mathrm{H}_{\mot}^{4,2}(X,\Z)$ is onto (use Lemma \ref{lem:exhaustive}), there exists $\alpha_2\in F^2_{\mot}\K_0(X)$ with $c_2(\alpha_2)=a_2$. Note that $c_1(\alpha_2)=0$ and thus that $\alpha_1+\alpha_2\in F^1_{\mot}\K_0(X)$ satisfies $c_i(\alpha_1+\alpha_2)=a_i$ for $i=1,2$. We set $b_3:=c_3(\alpha_1+\alpha_2)$ and choose $\alpha_3\in  F^3_{\mot}\K_0(X)$ with $c_3(\alpha_3)=a_3-b_3$. We claim that $\alpha:=\alpha_1+\alpha_2+\alpha_3\in F^1_{\mot}\K_0(X)$ has the required Chern classes. For this, we write $\alpha_i=[E_i]-[\mathcal{O}_X^3]$ with $E_i$ of rank $3$ for $i=2,3$ (use Serre's splitting theorem), and observe that $\alpha$ corresponds to the class of $[E_3\oplus E_2\oplus L]-[\mathcal{O}_X^7]$. Further, we have $c_2(L)=c_3(L)=0$, $c_1(E_2)=0$ and $c_1(E_3)=c_2(E_3)=0$. We can then compute the Chern classes of $E_1\oplus E_2\oplus L$ using the Whitney sum formula, which immediately yields the result. To conclude, we observe that Serre's splitting theorem shows that $[E_1\oplus E_2\oplus L]-[\mathcal{O}_X^7]=[E]-[\mathcal{O}_X^3]$ for some $E$ of rank $3$.
\end{proof}

\begin{cor}
Let $X$ be an affine normal threefold over an algebraically closed field $k$. Then the map
\[
(c_1, c_2) \colon \Vect_2(X) \to \H^{2,1}_\mot (X, \Z) \times \H^{4,2}_\mot (X, \Z)
\]
is onto. 
\end{cor}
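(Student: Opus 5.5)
Given $(a_1,a_2)\in \H^{2,1}_\mot(X,\Z)\times\H^{4,2}_\mot(X,\Z)$, I will first realize this pair together with $a_3=0$ by a rank $3$ bundle, and then split off a trivial line bundle. By Theorem~\ref{thm:threefolds}, which is surjective, there is a rank $3$ bundle $E$ on $X$ with $c_1(E)=a_1$, $c_2(E)=a_2$ and $c_3(E)=0$. It then suffices to show that $E\simeq E'\oplus\mathcal O_X$ for some rank $2$ bundle $E'$. The Whitney sum formula gives $c_i(E')=c_i(E)$ for $i=1,2$, since $c_t(\mathcal O_X)=1$, and this finishes the argument.

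The splitting criterion is the main step, and I plan to prove it with the argument already used in Theorem~\ref{thm:third-piece-smooth-points}. Set $\alpha:=[E]-[\det E]-[\mathcal O_X^2]$. Lemma~\ref{lm:rank} and Lemma~\ref{lm:determinant} give $\alpha\in F^2_\mot\K_0(X)$. Write $L=\det E$. Then $c_t(-[L])=1-c_1(L)t+c_1(L)^2t^2-c_1(L)^3t^3$, so
\[
c_2(\alpha)=a_2-a_1^2 \quad\text{and}\quad c_3(\alpha)=-a_1a_2+a_1^3.
\]
This is not zero in general, so I will instead work with an honest section. Take a section of $E^\vee$, i.e.\ a map $s\colon E\to\mathcal O_X$, chosen by Swan's Bertini theorem \cite[\S 3]{Mohan82} to be generic. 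Its zero locus is then a finite set of smooth closed points $x_1,\dots,x_n$. This is the situation treated in \cite{Mohan82}: the local obstruction is the class of the zero scheme in $\K_{0}(\{x_1,\dots,x_n\};X)$, i.e.\ $\sum \varphi_{x_i}(1)$. By the Koszul complex, $c_3(E^\vee)=-c_3(E)$ equals the cycle class of the zero scheme, computed with supports as in the Corollary above. Since $c_3(E)=0$, the element $\sum_i\varphi_{x_i}(1)\in F^3_{\mathrm{sm}}\K_0(X)$ maps to $0$ under $c_3$. The Corollary says $c_3$ is injective on $F^3_\mot\K_0(X)=F^3_{\mathrm{sm}}\K_0(X)$, so this zero-cycle class vanishes in $\K_0(X)$.

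It remains to pass from the vanishing of this class to a nowhere vanishing section. Here I would follow Mohan Kumar--Murthy's argument. Murthy's theorem \cite[Theorem~3.7]{Murthy94} states that a rank $d$ bundle whose associated zero-cycle class in $F^d\K_0$ vanishes has a unimodular element. Its proof uses only Bertini, local moves at smooth points, and the torsion-freeness and divisibility of the zero-cycle group (here Proposition~\ref{prop:torsionfree} and \cite[Corollary 1.5]{Krishna19}). Because all the relevant points lie in $X_{\mathrm{smooth}}$, these manipulations can be carried out in the smooth locus. The main obstacle is to check that Murthy's argument goes through on a normal, not necessarily smooth, affine threefold once $F^3_{\mathrm{sm}}\K_0(X)$ is used as the zero-cycle group. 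My first attempt would be to rerun his proof, replacing $\mathrm{CH}^3$ by $F^3_{\mathrm{sm}}\K_0(X)\simeq \H^{6,3}_\mot(X,\Z)$ and keeping the Bertini choices away from the codimension-$\geq 2$ singular locus. The resulting unimodular section gives $E\simeq E'\oplus\mathcal O_X$.
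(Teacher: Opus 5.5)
Your strategy is the paper's. You realize $(a_1,a_2,0)$ by a rank $3$ bundle $E$ via Theorem~\ref{thm:threefolds}, show that the zero-cycle class attached to $E$ vanishes in $F^3_{\mathrm{sm}}\K_0(X)$ by injectivity of $c_3$, and then conclude with Murthy. As written, however, the proposal is unfinished, and the step you call the main obstacle is not one. You already quote the right statement. The paper simply invokes \cite[Theorem~3.8]{Murthy94}, which is formulated for reduced, possibly singular, affine algebras over an algebraically closed field. Its zero-cycle group is the subgroup of $\K_0$ generated by classes of smooth closed points, which is exactly $F^3_{\mathrm{sm}}\K_0(X)$. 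So once the class vanishes, $E\simeq E'\oplus\mathcal O_X$ follows by citation. There is nothing to rerun in the smooth locus, and deferring this to ``my first attempt would be to rerun his proof'' leaves your argument open for no reason.

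The step that is genuinely under-justified is ``by the Koszul complex, $-c_3(E)$ equals the cycle class of the zero scheme.'' For Elmanto--Morrow motivic cohomology of a singular $X$, this comparison between the Chern class of a bundle and a class with supports is not established. The Koszul complex of $s$ only gives the $K$-theoretic identity
\[
\sum_i\varphi_{x_i}(1)=[\mathcal O_Z]=\beta:=[\mathcal O_X]-[E]+[\Lambda^2E]-[\det E]\in\K_0(X).
\]
The natural proof of your identification would go through $c_3(\beta)=-2c_3(E)$ and torsion-freeness of $\H^{6,3}_{\mot}(X,\Z)$. At that point the cycle class, and even the choice of section $s$, are superfluous. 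This is what the paper does. The splitting principle gives $c_1(\beta)=c_2(\beta)=0$, so $\beta\in F^3_{\mot}\K_0(X)$ by Lemma~\ref{lm:determinant} and Proposition~\ref{prop:secondchernclass}. It also gives $c_3(\beta)=-2c_3(E)=0$. Injectivity of $c_3$ on $F^3_{\mot}\K_0(X)=F^3_{\mathrm{sm}}\K_0(X)$ then forces $\beta=0$, which is the input for Murthy's theorem.

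Incidentally, your discarded computation is off: one gets $c_2(\alpha)=a_2$ and $c_3(\alpha)=-a_1a_2$. This plays no role in the argument.
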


\begin{proof}
If $(a_1,a_2)\in  \H^{2,1}_\mot (X, \Z) \times \H^{4,2}_\mot (X, \Z)$, the above theorem shows that  there exists a rank $3$ bundle $E$ with $c_i(E)=a_i$ for $i=1,2$ and $c_3(E)=0$. It remains to observe that in that case $E=E'\oplus\mathcal{O}_X$. Consider the class $\beta:=[\mathcal{O}_X]-[E]+[\Lambda^2E]-[\det E]\in \K_0(X)$. Using the splitting principle, we see that $c_1(\beta)=0$ and $c_2(\beta)=0$ (if $\ell_1,\ell_2,\ell_3$ are roots of $E$, then $\ell_1+\ell_2,\ell_1+\ell_3,\ell_2+\ell_3$ are roots of $\Lambda^2E$). Consequently, $\beta\in F^3_{\mot}\K_0(X)=F^3_{\mathrm{sm}}\K_0(X)$. Using the splitting principle once again, we see that $c_3(\beta)=-2c_3(E)=0$. Since $c_3$ is an isomorphism, we deduce that the class of $\beta$ is trivial in $F^3_{\mathrm{sm}}\K_0(X)$ and we conclude using \cite[Theorem 3.8]{Murthy94} that $E=E'\oplus\mathcal{O}_X$.
\end{proof}

\bibliographystyle{alphamod}
\let\mathbb=\mathbf
{\small
\bibliography{vectorbundles}
}

\end{document}